\begin{document}

\newcounter{lemma}
\newcommand{\lemma}{\par \refstepcounter{lemma}%
{\bf ╦хььр \arabic{lemma}.}}

\newcounter{corollary}
\newcommand{\corollary}{\par \refstepcounter{corollary}%
{\bf ╤ыхфёЄтшх \arabic{corollary}.}}

\newcounter{remark}
\newcommand{\remark}{\par \refstepcounter{remark}%
{\bf ╟рьхўрэшх \arabic{remark}.}}

\newcounter{theorem}
\newcommand{\theorem}{\par \refstepcounter{theorem}%
{\bf ╥хюЁхьр \arabic{theorem}.}}

\newcounter{proposition}
\newcommand{\proposition}{\par \refstepcounter{proposition}%
{\bf ╧Ёхфыюцхэшх \arabic{proposition}.}}

\renewcommand{\refname}{\centerline{\bf ╤яшёюъ ышЄхЁрЄєЁ√}}

\newcommand{\proof}{{\it ─юърчрЄхы№ёЄтю.\,\,}}

\title{ {\leftline{\small ╙─╩ 517.5}}
\medskip ╬ эшцэхь яюЁ фъх юЄюсЁрцхэшщ ё ъюэхўэ√ь шёърцхэшхь фышэ√}
\author{╤хтюёЄ№ эют ┼.└.}
\maketitle
%
%



%
%
%
%
\maketitle
\begin{abstract}
┬ эрёЄю ∙хщ ЁрсюЄх шчєўрхЄё  тюяЁюё ю Єръ эрч√трхьюь эшцэхь яюЁ фъх
фы  юфэюую яюфтшфр юЄюсЁрцхэшщ ё ъюэхўэ√ь шёърцхэшхь, ръЄштэю
шчєўрхь√ї яюёыхфэшх 15--20 ыхЄ. ─юърчрэю, ўЄю юЄюсЁрцхэш  ё ъюэхўэ√ь
шёърцхэшхь фышэ√ $f:D\rightarrow {\Bbb R}^n,$ $n\ge 2,$ тэх°э  
фшырЄрЎш  ъюЄюЁ√ї ыюъры№эю ёєььшЁєхьр т ёЄхяхэш $\alpha>n-1$ ш
шьх■∙шх ъюэхўэ√щ рёшьяЄюЄшўхёъшщ яЁхфхы, шьх■Є ЁртэюьхЁэю
юуЁрэшўхээ√щ ёэшчє эшцэшщ яюЁ фюъ.
\end{abstract}

{\it ╩ы■ўхт√х ёыютр:} юЄюсЁрцхэш  ё юуЁрэшўхээ√ь ш ъюэхўэ√ь
шёърцхэшхь, ЁюёЄ юЄюсЁрцхэш  эр схёъюэхўэюёЄш, юЄъЁ√Є√х фшёъЁхЄэ√х
юЄюсЁрцхэш , ╕ьъюёЄш ъюэфхэёрЄюЁют

{\it Key words:} mappings of finite and bounded distortion, growth
of a mapping at infinity, open discrete mappings

\section{┬тхфхэшх} ═шцэшщ яюЁ фюъ юЄюсЁрцхэшщ, ъръ шчтхёЄэю, шуЁрхЄ
ёє∙хёЄтхээє■ Ёюы№ яЁш фюърчрЄхы№ёЄтх эхъюЄюЁ√ї рэрыюуют ЄхюЁхь√
╧шърЁр, р Єръцх т ЄхюЁшш ЁрёяЁхфхыхэш  чэрўхэшщ (ёь., эряЁ.,
\cite{RV}, \cite{Ri} ш \cite{Ra$_1$}). ═рёЄю ∙р  чрьхЄър яюёт ∙хэр
шчєўхэш■ юЄюсЁрцхэшщ ё ъюэхўэ√ь шёърцхэшхь, ръЄштэю шчєўрхь√ї т
яюёыхфэхх тЁхь  (ёь., эряЁ., \cite{IM}, \cite{MRSY}--\cite{MRSY$_1$}
ш \cite{GRSY}). ╟фхё№ шёёыхфєхЄё  тюяЁюё юс юуЁрэшўхээюёЄш ёэшчє
эшцэхую яюЁ фър юЄюсЁрцхэшщ ё ъюэхўэ√ь шёърцхэшхь фышэ√, ттхф╕ээ√ї
╬. ╠рЁЄшю, ┬. ╨ чрэют√ь, ╙. ╤ЁхсЁю ш ▌. ▀ъєсют√ь
\cite{MRSY}--\cite{MRSY$_1$}.

┬ ёЁртэшЄхы№эю эхфртэхщ ЁрсюЄх ╩.~╨рщрыр \cite{Ra$_1$} с√ыю
єёЄрэютыхэю, ўЄю юЄюсЁрцхэш  ё ъюэхўэ√ь шёърцхэшхь, юяЁхфхы╕ээ√х тю
тё╕ яЁюёЄЁрэёЄтх ${\Bbb R}^n,$ $n\ge 2,$ яЁш ёююЄтхЄёЄтє■∙шї
юуЁрэшўхэш ї эр Єръ эрч√трхь√щ эшцэшщ яюЁ фюъ юЄюсЁрцхэш  (яютхфхэшх
юЄюсЁрцхэш  эр схёъюэхўэюёЄш), р Єръцх ЄЁхсютрэш ї эр тэх°э■■
фшырЄрЎш■, юсырфр■Є эхъюЄюЁ√ьш чрёыєцштр■∙шьш тэшьрэш  ётющёЄтрьш.
(╬яЁхфхыхэшх ш яЁшьхЁ√ юЄюсЁрцхэшщ ё ъюэхўэ√ь шёърцхэшхь ьюуєЄ с√Є№
эрщфхэ√ т ьюэюуЁрЇшш \cite{IM}). ├ютюЁ  Єюўэхх, т ЁрсюЄх
\cite{Ra$_1$} с√ыю яюърчрэю, ўЄю єърчрээ√х юЄюсЁрцхэш  шьх■Є
юуЁрэшўхээ√щ ёэшчє эшцэшщ яюЁ фюъ, Є.х., юуЁрэшўхэр ёэшчє эхъюЄюЁр 
тхышўшэр, юЄтхўр■∙р  чр ЁюёЄ юЄюсЁрцхэш . ╬ЄьхЄшь, ўЄю яюфюсэ√щ
Ёхчєы№ЄрЄ юсюс∙рхЄ ъырёёшўхёъє■ ЄхюЁхьє ╨шъьрэр--┬єюЁшэхэр,
юЄэюё ∙є■ё  ъ ъырёёє юЄюсЁрцхэшщ ё юуЁрэшўхээ√ь шёърцхэшхь
(\cite{RV}).

\medskip
┬ эрёЄю ∙хщ ёЄрЄ№х єёЄрэртыштрхЄё  эхъюЄюЁ√щ рэрыюу Ёхчєы№ЄрЄют
ЁрсюЄ√ \cite{Ra$_1$} фы  Єръ эрч√трхь√ї юЄюсЁрцхэшщ ё ъюэхўэ√ь
шёърцхэшхь фышэ√ (\cite[уы.~8]{MRSY}) яЁш эхёъюы№ъю шэ√ї
яЁхфяюыюцхэш ї эр шї тэх°э■■ фшырЄрЎш■. ╧Ёш ¤Єюь, ёююЄтхЄёЄтє■∙шх
яЁшьхЁ√, яЁштхф╕ээ√х т яюёыхфэхь Ёрчфхых, яюърч√тр■Є, ўЄю єърчрээ√х
єёыютш  яЁшэЎшяшры№эю юЄышўр■Єё  юЄ юуЁрэшўхэшщ шч ЁрсюЄ√
\cite{Ra$_1$}. ╬ЄюсЁрцхэш  ё ъюэхўэ√ь шёърцхэшхь фышэ√ ттхфхэ√ ╬.
╠рЁЄшю ёютьхёЄэю ё ┬. ╨ чрэют√ь, ╙. ╤ЁхсЁю ш ▌. ▀ъєсют√ь
(\cite{MRSY$_1$}) ш яЁхфёЄрты ■Є ёюсющ юфэю шч юсюс∙хэшщ юЄюсЁрцхэшщ
ё юуЁрэшўхээ√ь шёърцхэшхь яю ╨х°хЄэ ъє (ёь. \cite{Re} ш \cite{Ri}).
╬эш ьюуєЄ с√Є№ юяЁхфхыхэ√ ъръ юЄюсЁрцхэш , шёърцр■∙шх хтъышфютю
ЁрёёЄю эшх т ъюэхўэюх ўшёыю Ёрч т яюўЄш тёхї Єюўърї, р Єръцх
юсырфр■∙шх $N$-ётющёЄтюь ╦єчшэр юЄэюёшЄхы№эю ьхЁ√ ╦хсхур т ${\Bbb
R}^n$ ш ьхЁ√ фышэ√ эр ъЁшт√ї т яЁ ьє■ ш юсЁрЄэє■ ёЄюЁюэ√ (ёь. Єрь
цх).

\medskip
┬ё■фє фрыхх $D$ -- юсырёЄ№ т ${\Bbb R}^n,$ $n\ge 2,$ $m$ -- ьхЁр
╦хсхур ${\Bbb R}^n,$ ${\rm dist\,}(A,B)$ -- хт\-ъыш\-фютю ЁрёёЄю эшх
ьхцфє ьэюцхёЄтрьш $A, B\subset {\Bbb R}^n,$ ${\rm
dist\,}(A,B)=\inf\limits_{x\in A, y\in B} |x-y|,$ $(x,y)$ юсючэрўрхЄ
(ёЄрэфрЁЄэюх) ёъры Ёэюх яЁюшчтхфхэшх тхъЄюЁют $x,y\in {\Bbb R}^n,$
${\rm diam\,}A$ -- хт\-ъыш\-фют фшрьхЄЁ ьэюцхёЄтр $A\subset {\Bbb
R}^n,$ $B(x_0, r)=\left\{x\in{\Bbb R}^n: |x-x_0|< r\right\},$ ${\Bbb
B}^n := B(0, 1),$ $S(x_0,r) = \{ x\,\in\,{\Bbb R}^n : |x-x_0|=r\},$
${\Bbb S}^{n-1}:=S(0, 1),$ $\omega_{n-1}$ ючэрўрхЄ яыю∙рф№ ёЇхЁ√
${\Bbb S}^{n-1}$ т ${\Bbb R}^n,$ $\Omega_{n}$ -- юс·╕ь хфшэшўэюую
°рЁр ${\Bbb B}^{n}$ т ${\Bbb R}^n,$ чряшё№ $f:D\rightarrow {\Bbb
R}^n$ яЁхфяюырурхЄ, ўЄю юЄюсЁрцхэшх $f,$ чрфрээюх т юсырёЄш $D,$
эхяЁхЁ√тэю. \medskip {\it ╩Ёштющ} $\gamma$ ь√ эрч√трхь эхяЁхЁ√тэюх
юЄюсЁрцхэшх юЄЁхчър $[a,b]$ (юЄъЁ√Єюую шэЄхЁтрыр $(a,b),$ ышсю
яюыєюЄъЁ√Єюую шэЄхЁтрыр тшфр  $[a,b)$ шыш $(a,b]$) т ${\Bbb R}^n,$
$\gamma:[a,b]\rightarrow {\Bbb R}^n.$ ╧юф ёхьхщёЄтюь ъЁшт√ї $\Gamma$
яюфЁрчєьхтрхЄё  эхъюЄюЁ√щ ЇшъёшЁютрээ√щ эрсюЁ ъЁшт√ї $\gamma,$ р
$f(\Gamma)=\left\{f\circ\gamma|\gamma\in\Gamma\right\}.$ ─рыхх
ёшьтюы $\Gamma(E,F,D)$ ючэрўрхЄ ёхьхщёЄтю тёхї ъЁшт√ї
$\gamma:[a,b]\rightarrow{\Bbb R}^n,$ ъюЄюЁ√х ёюхфшэ ■Є $E$ ш $F$ т
$D,$ Є.х. $\gamma(a)\in E,$ $\gamma(b)\in F$ ш $\gamma(t)\in D$ яЁш
$t\in (a, b).$  ╤ыхфє■∙шх юяЁхфхыхэш  ьюуєЄ с√Є№ эрщфхэ√, эряЁ., т
Ёрчф. 1--6 уы. I т \cite{Va$_1$}. ┴юЁхыхтр ЇєэъЎш  $\rho:{\Bbb
R}^n\,\rightarrow [0,\infty]$ эрч√трхЄё  {\it фюяєёЄшьющ} фы 
ёхьхщёЄтр $\Gamma$ ъЁшт√ї $\gamma$ т ${\Bbb R}^n,$ хёыш
ъЁштюышэхщэ√щ шэЄхуЁры яхЁтюую Ёюфр юЄ ЇєэъЎшш $\rho$ яю ърцфющ
(ыюъры№эю ёяЁ ьы хьющ) ъЁштющ $\gamma\in \Gamma$ єфютыхЄтюЁ хЄ
єёыютш■ $\int\limits_{\gamma}\rho (x)|dx|\ge 1.$ ┬ ¤Єюь ёыєўрх ь√
яш°хь: $\rho \in {\rm adm}\Gamma.$ {\it ╠юфєыхь} ёхьхщёЄтр ъЁшт√ї
$\Gamma $ эрч√трхЄё  тхышўшэр $M(\Gamma)=\inf_{\rho \in \,{\rm
adm}\,\Gamma} \int\limits_D \rho ^n (x)\, dm(x).$ ╤тющёЄтр ьюфєы  т
эхъюЄюЁющ ьхЁх рэрыюушўэ√ ётющёЄтрь ьхЁ√ ╦хсхур $m$ т ${\Bbb R}^n$
(ёь. \cite[ЄхюЁхьр~6.2]{Va$_1$}). ╧єёЄ№ $Q(x):D\rightarrow [0,
+\infty]$ -- шчьхЁшьр  яю ╦хсхує ЇєэъЎш . ─ы  яЁюшчтюы№эюую
ёхьхщёЄтр ъЁшт√ї $\Gamma$ тхышўшэр $M_{Q}(\Gamma),$ эрч√трхьр  {\it
ьюфєыхь ёхьхщёЄтр $\Gamma$ ё тхёюь $Q$,} ьюцхЄ с√Є№ юяЁхфхыхэр
ёююЄэю°хэшхь
$M_{Q}(\Gamma):=\inf\limits_{\rho\in {\rm
adm\,}\Gamma}\int\limits_{{\Bbb R}^n} Q(x)\cdot\rho^{n}(x)dm(x).$
╧юырурхь
$$L(x,\varphi)=\limsup\limits_{y\rightarrow x, y \in
E}\,\frac{|\varphi(x)-\varphi(y)|}{|y-x|}\,,\quad
l(x,\varphi)=\liminf\limits_{y\rightarrow x, y\in E}\,
\frac{|\varphi(x)-\varphi(y)|}{|y-x|}\,.$$

\medskip ╬яЁхфхышь ЇєэъЎшш фышэ√, ёт чрээ√х ё юЄюсЁрцхэшхь эр ъЁшт√ї
(ёь. \cite{MRSY} ш \cite{MRSY$_1$}). ╧єёЄ№ $\Delta \subset \Bbb R$
-- юЄъЁ√Є√щ шэЄхЁтры ўшёыютющ яЁ ьющ, $\gamma: \Delta\rightarrow
{\Bbb R}^n$ -- ыюъры№эю ёяЁ ьы хьр  ъЁштр . ┬ Єръюь ёыєўрх,
юўхтшфэю, ёє∙хёЄтєхЄ хфшэёЄтхээр  эхєс√тр■∙р  ЇєэъЎш  фышэ√
$l_{\gamma}:\Delta\rightarrow \Delta_{\gamma}\subset \Bbb{R}$ ё
єёыютшхь $l_{\gamma}(t_0)=0,$ $t_0 \in \Delta,$ Єрър  ўЄю чэрўхэшх
$l_{\gamma}(t)$ Ёртэю фышэх яюфъЁштющ $\gamma\mid_{[t_0, t]}$ ъЁштющ
$\gamma,$ хёыш $t>t_0,$ ш фышэх яюфъЁштющ $\gamma\mid_ {[t,\,t_0]}$
ёю чэръюь $"$$-$$"$, хёыш $t<t_0,$ $t\in \Delta.$ ╧єёЄ№
$g:|\gamma|\rightarrow {\Bbb R}^n$ -- эхяЁхЁ√тэюх юЄюсЁрцхэшх, уфх
$|\gamma| = \gamma(\Delta)\subset \Bbb{R}^n.$ ╧Ёхфяюыюцшь, ўЄю
ъЁштр  $\widetilde{\gamma}=g\circ \gamma$ Єръцх ыюъры№эю ёяЁ ьы хьр.
╥юуфр, юўхтшфэю, ёє∙хёЄтєхЄ хфшэёЄтхээр  эхєс√тр■∙р  ЇєэъЎш 
$L_{\gamma,\,g}:\,\Delta_{\gamma} \rightarrow
\Delta_{\widetilde{\gamma}}$ Єрър , ўЄю
$L_{\gamma,\,g}\left(l_{\gamma}\left(t\right)
 \right)\,=\,l_{\widetilde{\gamma}}\left(t\right)$ яЁш тёхї
$t\in\Delta.$ ╤юуырёэю \cite[уы.~8]{MRSY} (ёь. Єръцх
\cite{MRSY$_1$}), юЄюсЁрцхэшх $f:D\rightarrow {\Bbb R}^n$ сєфхь
эрч√трЄ№ {\it юЄюсЁрцхэшхь ё ъюэхўэ√ь шёърцхэшхь фышэ√} (яш°хь:
$f\in FLD$), хёыш $f$ юсырфрхЄ $N$-ётющёЄтюь ╦єчшэр, фы  я.т. $x\in
D$
$0<l(x, f)\le L (x, f)<\infty$
ш, ъЁюьх Єюую, т√яюыэхэ√ ёыхфє■∙шх єёыютш :

\medskip
$(L_1)$\quad фы  я.т. ъЁшт√ї $\gamma\in D$\,\, ъЁштр 
$\widetilde{\gamma}=f \circ \gamma$ ыюъры№эю ёяЁ ьы хьр ш ЇєэъЎш 
$L_{\gamma,\,f}$ юсырфрхЄ $N$-ётющёЄтюь;

\medskip
$(L_2)$\quad фы  я.т. ъЁшт√ї $\widetilde{\gamma} \in f(D)$ ърцфюх
яюфэ Єшх $\gamma$ ъЁштющ $\widetilde{\gamma}$ ыюъры№эю ёяЁ ьы хью ш
ЇєэъЎш  $L_{\gamma,\,f}$ юсырфрхЄ $N^{\,-1}$-ётющёЄтюь.

\medskip ╟фхё№ фрыхх ъЁштр  $\gamma \in D$ эрч√трхЄё  {\it (яюыэ√ь) яюфэ Єшхь ъЁштющ
$\widetilde{\gamma}\in {\Bbb R}^n$ яЁш юЄюсЁрцхэшш $f:D \rightarrow
{\Bbb R}^n,$} хёыш $\widetilde{\gamma}=f \circ \gamma.$ ├ютюЁ Є, ўЄю
эхъюЄюЁюх ётющёЄтю т√яюыэхэю фы  {\it яюўЄш тёхї (я.т.) ъЁшт√ї}
юсырёЄш $D$, хёыш юэю шьххЄ ьхёЄю фы  тёхї ъЁшт√ї, ыхцр∙шї т $D$,
ъЁюьх эхъюЄюЁюую шї ёхьхщёЄтр, ьюфєы№ ъюЄюЁюую Ёртхэ эєы■.

\medskip
\begin{remark}\label{rem1}
╤юуырёэю \cite[ёыхфёЄтшх~8.1]{MRSY}, ёь. Єръцх
\cite[ёыхфёЄтшх~3.14]{MRSY$_1$}, юЄюсЁрцхэшх $f:D\rightarrow {\Bbb
R}^n$ яЁшэрфыхцшЄ ъырёёє юЄюсЁрцхэшщ ё ъюэхўэ√ь шёърцхэшхь фышэ√
Єюуфр ш Єюы№ъю Єюуфр, ъюуфр $f$ фшЇЇхЁхэЎшЁєхью яюўЄш тё■фє,
юсырфрхЄ $N$ ш $N^{\,-1}$-ётющёЄтрьш ш, ъЁюьх Єюую, т√яюыэхэ√
єёыютш  $(L_1)$ ш $(L_2).$ ┬ ётю■ юўхЁхф№, фы  фшёъЁхЄэ√ї юЄъЁ√Є√ї
юЄюсЁрцхэшщ єёыютш  $(L_1)$ ш $(L_2)$ ¤ътштрыхэЄэ√ ыюъры№эющ
рсёюы■Єэющ эхяЁхЁ√тэюёЄш ЇєэъЎшш фышэ√ $L_{\gamma,\,f},$
ёююЄтхЄёЄтхээю, т яЁ ьє■ ш юсЁрЄэє■ ёЄюЁюэ√ (ўЄю чряшё√тр■Є т тшфх
$f\in ACP$ ш $f\in ACP^{\,-1},$ ёь. эряЁ.,
\cite[яЁхфыюцхэшх~8.5]{MRSY}).
\end{remark}

\medskip
╧юырурхь $M_f(r)=\sup\limits_{x\in B(0, r)}|f(x)|.$ ╤юуырёэю
\cite{RV}, {\it эшцэшь яюЁ фъюь} юЄюсЁрцхэш  $f:{\Bbb
R}^n\rightarrow{\Bbb R}^n$ сєфхь эрч√трЄ№ тхышўшэє
%
$$\lambda_f:=\liminf\limits_{r\rightarrow\infty}(n-1)\frac{\log\log
M_f(r)}{\log r}$$
%
(ёь. Єръцх \cite{Ra$_1$}). ┼ёыш $\lambda_f>0,$ Єю сєфхь уютюЁшЄ№,
ўЄю $f$ шьххЄ яюыюцшЄхы№э√щ эшцэшщ яюЁ фюъ.

\medskip
═ряюьэшь, ўЄю ¤ыхьхэЄ $b\in {\Bbb R}^n$ {\it эрч√трхЄё 
рёшьяЄюЄшўхёъшь яЁхфхыюь} юЄюсЁрцхэш  $f$ т схёъюэхўэю єфры╕ээющ
Єюўъх, хёыш эрщф╕Єё  ъЁштр  $\gamma:[0, 1)\rightarrow { \Bbb R}^n$
Єрър , ўЄю $\lim\limits_{t\rightarrow 1-0}\gamma(t)=\infty$ ш
$\lim\limits_{t\rightarrow 1-0}f(\gamma(t))=b.$

\medskip
╩ръ юс√ўэю, фы  фшЇЇхЁхэЎшЁєхьюую т Єюўъх $x_0\in D$ юЄюсЁрцхэш 
$f:D\rightarrow {\Bbb R}^n$ ёшьтюы $f^{\,\prime}(x_0)$ юсючэрўрхЄ
ьрЄЁшЎє ▀ъюсш юЄюсЁрцхэш  $f$ т Єюўъх $x_0,$ р $J(x_0, f)$ --
 ъюсшрэ юЄюсЁрцхэш  $f$ т Єюўъх $x_0.$ ╧юырурхь $\Vert
f^{\,\prime}(x)\Vert=\max\limits_{h\in {\Bbb R}^n \setminus \{0\}}
\frac {|f^{\,\prime}(x)h|}{|h|}.$ {\it ┬эх°э   фшырЄрЎш } $K_O(x,f)$
(фшЇЇхЁхэЎшЁєхьюую) юЄюсЁрцхэш  $f$ т Єюўъх $x$ юяЁхфхы хЄё 
ёююЄэю°хэшхь
%
$$K_O(x,f)\quad =\quad\left\{
\begin{array}{rr}
\frac{\Vert f^{\,\prime}(x)\Vert^n}{|J(x, f)|}, & J(x,f)\ne 0,\\
1,  &  f^{\,\prime}(x)=0, \\
\infty, & \text{т\,\,юёЄры№э√ї\,\,ёыєўр ї}
\end{array}
\right.\,.$$
%
─ы  яЁюшчтюы№эющ шчьхЁшьющ яю ╦хсхує ЇєэъЎшш $Q:{\Bbb
R}^n\rightarrow [0,\infty]$ яюырурхь
\begin{equation}\label{eq32*}
q_{x_0}(r):=\frac{1}{\omega_{n-1}r^{n-1}}\int\limits_{|x-x_0|=r}Q(x)\,d{{\cal
H}^{n-1}(x)}\,.
\end{equation}
╬фэшь шч уыртэ√ї Ёхчєы№ЄрЄют эрёЄю ∙хщ ЁрсюЄ√  ты хЄё  єЄтхЁцфхэшх,
яЁштхф╕ээюх эшцх. ┼ую рэрыюу фюърчрэ т ёЄрЄ№х
\cite[ЄхюЁхьр~1.3]{Ra$_1$} фы  юЄюсЁрцхэшщ ё ъюэхўэ√ь шёърцхэшхь,
тэх°э   фшырЄрЎш  ъюЄюЁюую єфютыхЄтюЁ хЄ эхъюЄюЁ√ь єёыютш ь
¤ъёяюэхэЎшры№эюую ЁюёЄр. (╧ю ¤Єюьє яютюфє ёь. Єръцх ъырёёшўхёъшщ
ёыєўрщ юЄюсЁрцхэшщ ё юуЁрэшўхээ√ь шёърцхэшхь, ЁрёёьюЄЁхээ√щ
╤.~╨шъьрэюь ш ╠.~┬єюЁшэхэюь \cite{RV}).

\medskip
\begin{theorem}\label{th3}
{\sl ╧єёЄ№ $f:{\Bbb R}^n\rightarrow {\Bbb R}^n$ -- юЄъЁ√Єюх
фшёъЁхЄэюх юЄюсЁрцхэшх ё ъюэхўэ√ь шёърцхэшхь фышэ√. ╧Ёхфяюыюцшь,
ўЄю:

1) тэх°э   фшырЄрЎш  $K_O(x, f)$ юЄюсЁрцхэш  $f$ яЁш яюўЄш тёхї $x$
єфютыхЄтюЁ хЄ єёыютш■: $K_O(x, f)\le Q(x)\in L_{loc}^{\alpha}({\Bbb
R}^n),$ уфх $\alpha>n-1$ -- эхъюЄюЁюх ЇшъёшЁютрээюх ўшёыю;

2) эрщф╕Єё  $R_0>0$ Єръюх, ўЄю
\begin{equation}\label{eq18}
\frac{1}{m(B(0, t))}\int\limits_{B(0, t)} Q^{\alpha}(x)\,dm(x)\le
A\end{equation}
фы  эхъюЄюЁюую $A<\infty$ ш тёхї $t\ge R_0;$

3) яЁш $K\rightarrow\infty$
\begin{equation}\label{eq16}
\int\limits_{r}^{Kr}\frac{d\omega}{\omega
\widetilde{q}^{\frac{1}{n-1}}_0(\omega)}\rightarrow\infty
\end{equation}
ЁртэюьхЁэю яю $r> R_0,$ уфх $\widetilde{q}_0(\omega)$ юсючэрўрхЄ
ёЁхфэхх чэрўхэшх ЇєэъЎшш $Q^{n-1}$ эрф ёЇхЁющ $S(0, \omega).$

╥юуфр, хёыш юЄюсЁрцхэшх $f$ шьххЄ рёшьяЄюЄшўхёъшщ яЁхфхы $a\in {\Bbb
R}^n$ т схёъюэхўэю єфры╕ээющ Єюўъх, Єю хую эшцэшщ яюЁ фюъ
$\lambda_f$ єфютыхЄтюЁ хЄ єёыютш■
$$\lambda_f>M=M(n, Q)>0\,,$$
уфх ўшёыю $M$ чртшёшЄ Єюы№ъю юЄ ЁрчьхЁэюёЄш яЁюёЄЁрэёЄтр $n$ ш
ЇєэъЎшш $Q.$}
\end{theorem}

\medskip
\begin{corollary}\label{cor1}
{\sl ╟ръы■ўхэшх ЄхюЁхь√ \ref{th3} юёЄр╕Єё  т√яюыэхээ√ь, хёыш тьхёЄю
єёыютшщ (\ref{eq18}) ш (\ref{eq16}) т√яюыэхэю єёыютшх: $q_{\alpha,
0}(r)\le C $ яЁш тёхї $r\ge R_0$ ш эхъюЄюЁюь $R_0\ge 1,$ уфх
$q_{\alpha, 0}(r)$ -- ёЁхфэхх чэрўхэшх ЇєэъЎшш $Q^{\alpha}(x)$ яю
ёЇхЁх $S(0, r),$ $\alpha>n-1.$}
 \end{corollary}

\medskip
╧юёъюы№ъє юЄюсЁрцхэш  ъырёёр $W_{loc}^{1, n},$ ьхЁр ьэюцхёЄтр Єюўхъ
тхЄтыхэш  ъюЄюЁ√ї Ёртэр эєы■, фы  ъюЄюЁ√ї $K_O(x, f)\in
L_{loc}^{\alpha},$ $\alpha>n-1,$  ты ■Єё  юЄъЁ√Є√ьш ш фшёъЁхЄэ√ьш
(ёь. \cite{MV$_1$}--\cite{MV$_2$}), р Єръцх -- юЄюсЁрцхэш ьш ё
ъюэхўэ√ь шёърцхэшхь фышэ√ (ёь. \cite[ЄхюЁхьр~1 ш
ёыхфёЄтшх~1]{S$_2$}), шьххь Єръцх ёыхфє■∙хх трцэхщ°хх

\medskip
\begin{corollary}\label{cor2}
{\sl ╟ръы■ўхэшх ЄхюЁхь√ \ref{th3} юёЄр╕Єё  т√яюыэхээ√ь, хёыш т
єёыютш ї ¤Єющ ЄхюЁхь√ $f$  ты хЄё  юЄюсЁрцхэшхь ъырёёр $W_{loc}^{1,
n},$ ьхЁр ьэюцхёЄтр Єюўхъ тхЄтыхэш  ъюЄюЁюую Ёртэр эєы■.}
 \end{corollary}

\medskip
┬ ўрёЄэюёЄш, шч ЄхюЁхь√ \ref{th3} ёыхфєхЄ ъырёёшўхёъшщ Ёхчєы№ЄрЄ
╨шъьрэр--┬єюЁшэхэр ю яюыюцшЄхы№эюёЄш ш ЁртэюьхЁэющ юуЁрэшўхээюёЄш
ёэшчє эшцэхую яюЁ фър юЄюсЁрцхэшщ ё юуЁрэшўхээ√ь шёърцхэшхь
\cite{RV}.

\medskip
\section{╧ЁхфтрЁшЄхы№э√х ётхфхэш } ─ы  сюЁхыхтёъюую ьэюцхёЄтр
$A\subset {\Bbb R}^n$ юяЁхфхышь {\it ЇєэъЎш■ ъЁрЄэюёЄш $N(y,f,A)$}
ъръ ўшёыю яЁююсЁрчют Єюўъш $y$ тю ьэюцхёЄтх $A,$ Є.х.
%
$$N(y,f,A)\,=\,{\rm card}\,\left\{x\in E: f(x)=y\right\}\,,\quad
%
N(f,E)\,=\,\sup\limits_{y\in{\Bbb R}^n}\,N(y,f,E)\,.$$
%
╟рьхЄшь, ўЄю ЇєэъЎш  $N(y,f,A)$  ты хЄё  шчьхЁшьющ яю ╦хсхує (ёь.,
эряЁ., \cite[ЄхюЁхьр~IV.1.2]{RR}). ╤ыхфє■∙шщ Ёхчєы№ЄрЄ яЁхфёЄрты хЄ
ёюсющ эхчэрўшЄхы№эюх єёшыхэшх юфэюую шч ъырёёшўхёъшї ьюфєы№э√ї
эхЁртхэёЄт фы  юЄюсЁрцхэшщ ё юуЁрэшўхээ√ь шёърцхэшхь (ёь.
\cite[ЄхюЁхьр~2.4, уы.~II]{Ri}).

\medskip
\begin{theorem}\label{th1}
{\sl ╧єёЄ№ $f:D\rightarrow {\Bbb R}^n$ -- юЄюсЁрцхэшх ё ъюэхўэ√ь
шёърцхэшхь фышэ√, $Q:D\rightarrow [1, \infty]$ -- эхъюЄюЁр 
шчьхЁшьр  яю ╦хсхує ЇєэъЎш , Єрър  ўЄю $K_O(x, f)\le Q(x)$ яюўЄш
тё■фє, $\Gamma$ -- ЇшъёшЁютрээюх ёхьхщёЄтю ъЁшт√ї т $D$ ш $\rho\in
{\rm adm\,}f(\Gamma).$ ╥юуфр
$$M_{1/Q}(\Gamma)\le \int\limits_{{\Bbb R}^n}\rho^n(y)N(y, f, A)dm(y)\,.$$}
\end{theorem}
\begin{proof}
╧єёЄ№ $\rho^{\,\prime}\in {\rm adm\,}f(\Gamma),$ Єюуфр юяЁхфхышь
ЇєэъЎш■ $\rho$ яюырур  $\rho(x)=\rho^{\,\prime}(f(x))\Vert
f^{\,\prime}(x)\Vert$ яЁш $x\in A$ ш $\rho(x)=0$ яЁш $x\in {\Bbb
R}^n\setminus A.$ ╧єёЄ№ $\Gamma_0$ -- ёхьхщёЄтю тёхї ыюъры№эю
ёяЁ ьы хь√ї ъЁшт√ї ёхьхщёЄтр $\Gamma,$ эр ъюЄюЁ√ї $f$ ыюъры№эю
рсёюы■Єэю эхяЁхЁ√тэю, Єюуфр єўшЄ√тр  юяЁхфхыхэшх юЄюсЁрцхэшщ ё
ъюэхўэ√ь шёърцхэшхь фышэ√ ш чрьхўрэшх \ref{rem1}, ь√ яюыєўшь:
$M(\Gamma)=M(\Gamma_0).$ (╬Єё■фр, т ўрёЄэюёЄш, т√ЄхърхЄ, ўЄю Єръцх
$M_{1/Q}(\Gamma)=M_{1/Q}(\Gamma_0)$). ┬ Єръюь ёыєўрх, тшфє
\cite[ыхььр~2.2, уы.~II]{Ri},
$\int\limits_{\gamma} \rho(x) |dx|=\int\limits_{\gamma}
\rho^{\,\prime}(f(x))\Vert f^{\,\prime}(x)\Vert |dx|\ge
\int\limits_{f\circ\gamma} \rho^{\,\prime}(y) |dy| \ge 1,$
ёыхфютрЄхы№эю, $\rho\in {\rm adm\,}\Gamma_0.$ ╙ўшЄ√тр  фы 
яЁюшчтюы№эюую юЄюсЁрцхэш  ё ъюэхўэ√ь шёърцхэшхь фышэ√ тючьюцэюёЄ№
чрьхэ√ яхЁхьхээющ (ёь., эряЁ., \cite[яЁхфыюцхэшх~8.3]{MRSY}, ёь.
Єръцх \cite[яЁхфыюцхэшх~3.7]{MRSY$_1$}), ь√ сєфхь шьхЄ№:
$$M_{1/Q}(\Gamma)=M_{1/Q}(\Gamma_0)\le \int\limits_{{\Bbb R}^n}
\frac{\rho^n(x)}{Q(x)}\,dm(x)=\int\limits_{A}
\frac{\rho^{\,\prime\,n}(f(x))\Vert f^{\,\prime}(x)\Vert^n}{Q(x)}\,
dm(x)\le$$
$$
\le\int\limits_{A} \rho^{\,\prime\,n}(f(x))|J(x, f)|\,
dm(x)=\int\limits_{{\Bbb R}^n}\rho^n(y)N(y, f, A)dm(y)\,.$$ ╥хюЁхьр
фюърчрэр.
\end{proof}$\Box$

\medskip
╧єёЄ№ $S(a, r)$ - яЁюшчтюы№эр  ёЇхЁр, Єюуфр фы  ўшёыр $p\in {\Bbb
R},$ $p\ge 1,$ ш ёхьхщёЄтр ъЁшт√ї $\Gamma,$ шьх■∙шї эюёшЄхы№,
ыхцр∙шщ эр ёЇхЁх $S(a, r),$ юяЁхфхышь ьюфєы№ ёхьхщёЄтр ъЁшт√ї
яюЁ фър $p$ юЄэюёшЄхы№эю $S(a, r)$ ЁртхэёЄтюь
$M_p^S(\Gamma)=\inf_{\rho \in \,{\rm adm}\,\Gamma} \int\limits_{S(a,
r)} \rho^p(x)\,d{\cal H}^{n-1}(x),$
уфх, ъръ юс√ўэю, ${\cal H}^{n-1}$ -- $(n-1)$-ьхЁэр  ьхЁр ╒рєёфюЁЇр.
╒юЁю°ю шчтхёЄэю (ёь. \cite[ЄхюЁхьр~3, ё.~514]{Car}), ўЄю, ъръют√ с√
эх с√ыш ьэюцхёЄтр $E_1, E_2\subset S(a, r),$ $E_1\cap
E_2=\varnothing,$ яЁш $n-1<p<n$ ш $p>n$ тёхуфр
\begin{equation}\label{eq1}
M_p^S(\Gamma(E_1, E_2, S(a, r)))\ge\frac{C_{p, n}}{r^{p+1-n}}\,,
\end{equation}
уфх $C_{p, n}$ -- эхъюЄюЁр  яюёЄю ээр , чртшё ∙р  юЄ ЁрчьхЁэюёЄш
яЁюёЄЁрэёЄтр $n$ ш т√сЁрээюую ўшёыр $p.$ ╤яЁртхфышт ёыхфє■∙шщ
Ёхчєы№ЄрЄ.

\medskip
\begin{theorem}\label{th2}
{\sl ╧єёЄ№ $f:D\rightarrow {\Bbb R}^n$ -- юЄюсЁрцхэшх ё ъюэхўэ√ь
шёърцхэшхь фышэ√ ш $0<c_1<c_2<\infty$ -- эхъюЄюЁ√х ЇшъёшЁютрээ√х
ўшёыр. ╧єёЄ№ Єръцх $Q:D\rightarrow [1, \infty]$ -- шчьхЁшьр  яю
╦хсхує ЇєэъЎш  Єрър , ўЄю $K_O(x, f)\le Q(x)$ яЁш яюўЄш тёхї $x\in
D,$ яЁш ¤Єюь, $Q\in L_{loc}^{\alpha}(D)$ яЁш эхъюЄюЁюь ЇшъёшЁютрээюь
$\alpha>n-1.$ ╧Ёхфяюыюцшь, ўЄю $E$ ш $F$ -- ярЁр ьэюцхёЄт,
єфютыхЄтюЁ ■∙шї єёыютш■ $E\cap S(a, r)\ne \varnothing\ne F\cap S(a,
r)$ яЁш тёхї $r\in (c_1, c_2).$ ╬сючэрўшь $\Delta:=\Gamma(E, F, B(a,
c_2)\setminus \overline{B(a, c_1)}).$ ╥юуфр
\begin{equation}\label{eq27} M_{1/Q}(\Delta)\ge \frac{C_{n,
\alpha}}{\frac{c_2^n}{c_2^n-c_1^n}\cdot\left(\frac{1}{c_2^n-c_1^n}\int\limits_{B(a,
c_2)\setminus B(a, c_1)} Q^{\alpha}(x)\,dm(x)\right)^{1/\alpha}}\,,
\end{equation}
уфх $C_{n, \alpha}$ -- эхъюЄюЁр  яюёЄю ээр , чртшё ∙р  Єюы№ъю юЄ $n$
ш $\alpha.$}
\end{theorem}

\medskip
\begin{proof}
┬√схЁхь $\rho\in {\rm adm}\,\Gamma(E, F, B(a, c_2)\setminus
\overline{B(a, c_1)})$ ═х юуЁрэшўштр  юс∙эюёЄш, ьюцэю ёўшЄрЄ№, ўЄю
%
$\int\limits_D \frac{\rho^n(x)}{Q(x)}\,dm(x)<\infty$ ш
%
$E\cap F=\varnothing.$ ╧єёЄ№ $\alpha$ -- яюыюцшЄхы№эюх ўшёыю шч
єёыютш  ЄхюЁхь√, Єюуфр яюырурхь $p:=\frac{\alpha n}{1+\alpha}.$
╟рьхЄшь, ўЄю, т ¤Єюь ёыєўрх, ўшёыю $p$ єфютыхЄтюЁ хЄ єёыютш■: $p\in
(n-1, n).$ ┬тшфє ёююЄэю°хэш  (\ref{eq1}) яюыєўрхь, ўЄю
\begin{equation}\label{eq2a}
\int\limits_{S(a, r)}\rho^p(x)\,d{\cal H}^{n-1}(x)\ge
M_p^S(\Gamma^r)\ge\frac{C_{p, n}}{r^{p+1-n}}\,,
\end{equation}
уфх $\Gamma^r=\{\gamma\in \Gamma(E, F, B(a, c_2)\setminus
\overline{B(a, c_1)}): |\gamma|\in S(a, r)\}.$ ╚ч (\ref{eq2a})
яюыєўрхь эхЁртхэёЄтю
\begin{equation}\label{eq3}
1\le \widetilde{C_{p, n}}\cdot r\left(r^{1-n}\int\limits_{S(a,
r)}\rho^p(x)\,d{\cal H}^{n-1}(x)\right)^{1/p}\,.
\end{equation}
╧юърцхь ЄхяхЁ№, ўЄю яЁш эхъюЄюЁюь $r\in (c_1, c_2)$ ш $K>n$
т√яюыэхэю эхЁртхэёЄтю
\begin{equation}\label{eq4}
\left(r^{1-n}\int\limits_{S(a, r)}\rho^p(x)\,d{\cal
H}^{n-1}(x)\right)^{1/p}\le
K\cdot\left(\frac{1}{c_2^{n}-c_1^n}\int\limits_{B(a, c_2)\setminus
B(a, c_1)}\rho^p(x)dm(x)\right)^{1/p}\,.\end{equation}
─хщёЄтшЄхы№эю, хёыш $\int\limits_{B(a, c_2)\setminus \overline{B(a,
c_1)}}\rho^p(x)dm(x)=\infty$ фюърч√трЄ№ эхўхую, р хёыш
$$\int\limits_{B(a, c_2)\setminus \overline{B(a,
c_1)}}\rho^p(x)dm(x)=0\,,$$ Єю ш $\int\limits_{S(a,
r)}\rho^p(x)\,d{\cal H}^{n-1}(x)=0$ эр яюўЄш тёхї ёЇхЁрї. ╧єёЄ№
ЄхяхЁ№ $$0<\int\limits_{B(a, c_2)\setminus \overline{B(a,
c_1)}}\rho^p(x)dm(x)<\infty\,.$$ ╧Ёхфяюыюцшь яЁюЄштэюх, р шьхээю,
ўЄю ёююЄэю°хэшх (\ref{eq4}) эрЁє°хэю яЁш тёхї $r\in (c_1, c_2),$
Єюуфр Єръцх
\begin{equation}\label{eq5}
r^{1-n}\int\limits_{S(a, r)}\rho^p(x)\,d{\cal H}^{n-1}(x)\ge
K^p\cdot \frac{1}{(c_2^{n}-c_1^n)}\int\limits_{B(a, c_2)\setminus
B(a, c_1)}\rho^p(x)\,dm(x)\end{equation}
яЁш тёхї $r\in (c_1, c_2).$ ╚эЄхуЁшЁє  ёююЄэю°хэшх (\ref{eq5}) яю
$r\in (c_1, c_2),$ ттшфє ЄхюЁхь√ ╘єсшэш ь√ яЁшїюфшь ъ яЁюЄштюЁхўш■,
ъюЄюЁюх ш фюърч√трхЄ ёююЄэю°хэшх (\ref{eq4}). ─рыхх, яЁшьхэ  
эхЁртхэёЄтю ├╕ы№фхЁр яЁш $p:=\frac{\alpha n}{1+\alpha}$ ш яюырур 
$A:=\frac{1}{c_2^n-c_1^n},$ яюыєўрхь:
$$\left(A\int\limits_{B(a, c_2)\setminus
B(a, c_1)}\rho^p(x) dm(x)\right)^{1/p}\le$$
%
$$\le\left(A\int\limits_{B(a, c_2)\setminus B(a,
c_1)}\frac{\rho^n(x)}{Q(x)}\,dm(x)\right)^{\frac{1}{n}}\cdot
\left(A\int\limits_{B(a, c_2)\setminus B(a, c_1)}Q^{\alpha}(x)\,
dm(x)\right)^{\frac{n-p}{pn}}\,.$$
%
╬с·хфшэ   ёююЄэю°хэш  (\ref{eq3}) ш (\ref{eq4}) тьхёЄх ё яюёыхфэшь
ёююЄэю°хэшхь ш єўшЄ√тр , ўЄю т ёфхырээ√ї яЁхфяюыюцхэш ї $r\le c_2,$
ь√ яюыєўшь ёююЄэю°хэшх (\ref{eq27}) яЁш эхъюЄюЁющ яюёЄю ээющ
$\widetilde{C}_{n, p},$ чртшё ∙хщ Єюы№ъю юЄ $n$ ш $p;$ юфэръю, $p$
ёрью яюыэюёЄ№■ юяЁхфхы хЄё  яю $n$ ш $\alpha,$ Єръ ўЄю т
(\ref{eq27}) ьюцэю чрьхэшЄ№ $\widetilde{C}_{n, p}$ эр $C_{n,
\alpha},$ ўЄю ш ЄЁхсютрыюё№ єёЄрэютшЄ№.
\end{proof} $\Box$

\medskip
╤ыхфє■∙хх юяЁхфхыхэшх шуЁрхЄ трцэє■ Ёюы№ яЁш шёёыхфютрэшш
юЄюсЁрцхэшщ (ёь. \cite[Ёрчф.~3, уы.~II]{Ri}). ╧єёЄ№ $x_1,\ldots,x_k$
-- $k$ Ёрчышўэ√ї Єюўхъ ьэюцхёЄтр $f^{-1}\left(\beta(a)\right)$ ш
$ \widetilde{m} = \sum\limits_{i=1}^k i(x_i,\,f).$
╩Ёштр  $\alpha:[a, c]\rightarrow {\Bbb R}^n$ сєфхЄ эрч√трЄ№ё 
(ўрёЄшўэ√ь) {\it яюфэ Єшхь} ъЁштющ $\beta:[a, b]\rightarrow {\Bbb
R}^n$ ё эрўрыюь т Єюўъх $x,$ $c\le b,$ хёыш $\alpha(a)=x$ ш
$f\circ\alpha(t)=\beta(t)$ яЁш $t\in [a, c.]$ ╧Ёштхф╕ээюх
юяЁхфхыхэшх ЁрёяЁюёЄЁрэ хЄё  Єръцх эр ъЁшт√х, чрфрээ√х эр ъръюь-ышсю
яюыєюЄъЁ√Єюь шэЄхЁтрых.

\medskip
╧єёЄ№ $f:D \rightarrow {\Bbb R}^n$, $n\ge 2,$ -- юЄюсЁрцхэшх,
$\beta: [a,\,b)\rightarrow {\Bbb R}^n$ -- эхъюЄюЁр  ъЁштр  ш
$x\in\,f^{\,-1}\left(\beta(a)\right).$ ╩Ёштр  $\alpha:
[a,\,c)\rightarrow D$ эрч√трхЄё  {\it ьръёшьры№э√ь яюфэ Єшхь} ъЁштющ
$\beta$ яЁш юЄюсЁрцхэшш $f$ ё эрўрыюь т Єюўъх $x,$ хёыш $(1)\quad
\alpha(a)=x;$ $(2)\quad f\circ\alpha=\beta|_{[a,\,c)};$ $(3)$\quad
хёыш $c<c^{\prime}\le b,$ Єю эх ёє∙хёЄтєхЄ ъЁштющ $\alpha^{\prime}:
[a,\,c^{\prime})\rightarrow D,$ Єръющ ўЄю
$\alpha=\alpha^{\prime}|_{[a,\,c)}$ ш $f\circ
\alpha=\beta|_{[a,\,c^{\prime})}.$ ╧юёыхфютрЄхы№эюёЄ№ ъЁшт√ї
$\alpha_1,\dots,\alpha_{\widetilde{m}}$ сєфхЄ эрч√трЄ№ё  {\it
ьръёшьры№эющ яюёыхфютрЄхы№эюёЄ№■ яюфэ Єшщ ъЁштющ $\beta$ яЁш
юЄюсЁрцхэшш $f$ ё эрўрыюь т Єюўърї $x_1,\ldots,x_k,$} хёыш
$(a)$\quad ърцфр  ъЁштр  $\alpha_j$  ты хЄё  ьръёшьры№э√ь яюфэ Єшхь
ъЁштющ $\beta$ яЁш юЄюсЁрцхэшш $f,$ $(b)\quad {\rm
card}\,\left\{j:a_j(a)=x_i\right\}= i(x_i,\,f),\quad 1\le i\le k\,,$
$(c)\quad {\rm card}\,\left\{j:a_j(t)=x\right\}\le i(x,\,f)$ яЁш
тёхї $x\in D$ ш тёхї $t\in I_j,$ уфх $I_j$ -- юсырёЄ№ юяЁхфхыхэш 
ъЁштющ $\alpha_j.$ ╩Ёшт√х $\alpha_1,\dots,\alpha_{\widetilde{m}}$
сєфхь эрч√трЄ№ {\it ёє∙хёЄтхээю юЄфхышь√ьш}, хёыш шч єърчрээ√ї т√°х
єёыютшщ т√яюыэхэю Єюы№ъю юфэю єёыютшх $(c).$ ╬ЄьхЄшь, ўЄю
яюёыхфютрЄхы№эюёЄш ёє∙хёЄтхээю юЄфхышь√ї ьръёшьры№э√ї яюфэ Єшщ ё
эрўрыюь т ЇшъёшЁютрээ√ї Єюўърї (т ўрёЄэюёЄш -- ъръюх-ышсю юфэю
ьръёшьры№эюх яюфэ Єшх) яЁш юЄъЁ√Є√ї фшёъЁхЄэ√ї юЄюсЁрцхэш ї тёхуфр
ёє∙хёЄтє■Є (ёь. \cite[ЄхюЁхьр~3.2, уы.~II]{Ri}).

\medskip
╟фхё№ ш фрыхх $l\left(f^{\,\prime}(x)\right)=\min\limits_{h\in {\Bbb
R}^n \setminus \{0\}} \frac {|f^{\,\prime}(x)h|}{|h|}.$ {\it
┬эєЄЁхээ   фшырЄрЎш } $K_I(x, f)$ юЄюсЁрцхэш  $f$ т Єюўъх $x$
юяЁхфхы хЄё  ЁртхэёЄтюь
$$K_{I}(x,f)\quad =\quad\left\{
\begin{array}{rr}
\frac{|J(x,f)|}{{l\left(f^{\,\prime}(x)\right)}^n}, & J(x,f)\ne 0,\\
1,  &  f^{\,\prime}(x)=0, \\
\infty, & \text{т\,\,юёЄры№э√ї\,\,ёыєўр ї}
\end{array}
\right.\,. $$
╩ръ шчтхёЄэю,
\begin{equation}\label{eq9C}
K_I(x,f)\le K_O^{n-1}(x,f),\qquad K_O(x,f)\le K_I^{n-1}(x,f)
\end{equation}
(ёь. \cite[ёююЄэю°хэш ~(2.7) ш (2.8), я.~2.1, уы.~I]{Re}), ш ўЄю
$K_I(x,f)\ge 1$ ш $K_O(x,f)\ge 1$ тё■фє, уфх ¤Єш тхышўшэ√
юяЁхфхыхэ√. ═ряюьэшь х∙╕ юфшэ эхюсїюфшь√щ эрь Ёхчєы№ЄрЄ, фюърчрээ√щ
Ёрэхх ртЄюЁюь (ёь., эряЁ., \cite[ЄхюЁхьр~3.1]{Sev}).

\medskip
\begin{proposition}\label{pr1}{\sl\,
╧єёЄ№ $f:D\rightarrow {\Bbb R}^n$ -- юЄъЁ√Єюх фшёъЁхЄэюх юЄюсЁрцхэшх
ё ъюэхўэ√ь шёърцхэшхь фышэ√, $\Gamma$ -- ёхьхщёЄтю ъЁшт√ї т $D,$
$\Gamma^{\,\prime}$ -- ёхьхщёЄтю ъЁшт√ї т ${\Bbb R}^n$ ш $m$ --
эрЄєЁры№эюх ўшёыю, Єръюх ўЄю т√яюыэхэю ёыхфє■∙хх єёыютшх. ─ы  ърцфющ
ъЁштющ $\beta\in \Gamma^{\,\prime}$ эрщфєЄё  ъЁшт√х
$\alpha_1,\ldots,\alpha_m$ ёхьхщёЄтр $\Gamma$ Єръшх ўЄю $f\circ
\alpha_j\subset \beta$ фы  тёхї $j$ ш ЁртхэёЄтю $\alpha_j(t)=x$
шьххЄ ьхёЄю яЁш тёхї $x\in D,$ тёхї $t$ ш эх сюыхх ўхь $i(x,f)$
шэфхъёрї $j.$ ╥юуфр
$ M(\Gamma^{\,\prime} )\le\frac{1}{m}\quad\int\limits_D
K_I(x,\,f)\cdot \rho^n (x)\,dm(x)
$
фы  ърцфющ ЇєэъЎшш $\rho \in {\rm adm}\,\Gamma.$}
\end{proposition}

\medskip ╤юуырёэю \cite[юяЁхфхыхэшх~6.6]{Va$_1$}, ёхьхщёЄтр ъЁшт√ї
$\Gamma_1, \Gamma_2,\ldots$ сєфхь эрч√трЄ№ {\it юЄфхышь√ьш}, хёыш
эрщфєЄё  эхяхЁхёхър■∙шхё  сюЁхыхтёъшх ьэюцхёЄтр $E_i\subset {\Bbb
R}^n$ Єръшх, ўЄю $\int\limits_{\gamma}g_i(x) |dx|=0$ фы  ърцфющ
ыюъры№эю ёяЁ ьы хьющ ъЁштющ $\gamma\in \Gamma_i,$ уфх $g_i$ --
їрЁръЄхЁшёЄшўхёър  ЇєэъЎш  ьэюцхёЄтр ${\Bbb R}^n\setminus E_i.$
─юърцхь ёыхфє■∙хх тёяюьюурЄхы№эюх єЄтхЁцфхэшх (ёь. Єръцх
\cite[ыхььр~1.3, уы.~IV]{Ri}).

\medskip
\begin{lemma}\label{lem1}
{\sl ╧єёЄ№ $f:D\rightarrow {\Bbb R}^n$ -- юЄъЁ√Єюх фшёъЁхЄэюх
юЄюсЁрцхэшх ё ъюэхўэ√ь шёърцхэшхь фышэ√, $m:{\Bbb
S}^{n-1}\rightarrow {\Bbb Z}$ -- эхъюЄюЁр  эхюЄЁшЎрЄхы№эр 
Ўхыюўшёыхээр  сюЁхыхтёър  ЇєэъЎш  ш фы  ърцфюую $y\in {\Bbb
S}^{n-1}$ чряшё№ тшфр $\beta_y:[s, t]\rightarrow {\Bbb R}^n$
юсючэрўрхЄ ъЁштє■ $\beta_y(u)=yu.$ ╬сючэрўшь ўхЁхч $\Gamma^{*}$
ёхьхщёЄтю ъЁшт√ї, ёюёЄю ∙шї шч $m(y)$ ўрёЄшўэ√ї ёє∙хёЄтхээю
юЄфхышь√ї яюфэ Єшщ ъЁштющ $\beta_y$ яЁш юЄюсЁрцхэшш $f$
(яЁхфяюыюцшь, ўЄю тёх ¤Єш $m(y)$ яюфэ Єшщ ёє∙хёЄтє■Є). ╥юуфр фы 
ърцфющ ЇєэъЎшш $\rho\in {\rm adm}\,\Gamma^*$ т√яюыэ хЄё  эхЁртхэёЄтю
$\int\limits_{{\Bbb S}^{n-1}}m(y)d{\cal H}^{n-1}(y)\le
\left(\log\frac{t}{s}\right)^{n-1} \int\limits_{D}K_I(x,
f)\cdot\rho^n(x)dm(x).$}
\end{lemma}

\begin{proof}
╧юырурхь $E_k:=\{y\in {\Bbb S}^{n-1}: m(y)=k\},$ $k\in {\Bbb N}\cup
\{0\},$ $\Gamma_k:=\{\beta_y: y\in E_k\}.$ ╧єёЄ№ Єръцх
$\Gamma_k^{\,*}$ хёЄ№ яюфёхьхщёЄтю ъЁшт√ї $\Gamma^{\,*},$ фы 
ъюЄюЁ√ї ёююЄтхЄёЄтє■∙р  ъЁштр  $\beta_y\in \Gamma_k.$ ┬тшфє
яЁхфыюцхэш  \ref{pr1}
\begin{equation}\label{eq8}
k\cdot M(\Gamma_k)\quad\le\quad \int\limits_D K_I(x,\,f)\cdot \rho^n
(x)\,dm(x)
\end{equation}
фы  ърцфющ ЇєэъЎшш $\rho \in {\rm adm}\,\Gamma_k^{\,*}.$ ┬тшфє
\cite[Ёрчф.~7.7]{Va$_1$},
\begin{equation}\label{eq9}
M(\Gamma_k)=\frac{{\cal
H}^{n-1}(E_k)}{\left(\log(t/s)\right)^{n-1}}\,.
\end{equation}
╟рьхЄшь, ўЄю ёхьхщёЄтр $\Gamma_k$ юЄфхышь√; Єюуфр $M_{K_I(\cdot,
f)}(\Gamma^{\,*})=\sum\limits_{k=0}^{\infty} M_{K_I(\cdot,
f)}(\Gamma_k^{\,*})$ (ёь. \cite[яєэъЄ (b), ё.~176  ш яєэъЄ (e),
ё.~178]{Fu}, ёь. Єръцх \cite[ЄхюЁхьр~15.1, уы.~I]{Sa}). ╥юуфр
ёєььшЁє  яю $k$ ёююЄэю°хэшх (\ref{eq8}) ш єўшЄ√тр  яЁш ¤Єюь
ёююЄэю°хэшх (\ref{eq9}), яюыєўрхь єЄтхЁцфхэшх ыхьь√.
\end{proof}$\Box$

\medskip
─ы  яЁюшчтюы№эюую фшёъЁхЄэюую юЄюсЁрцхэш  $f:D\rightarrow {\Bbb
R}^n$ ш сюЁхыхтёъюую ьэюцхёЄтр $E\subset D$ юяЁхфхышь Єръ эрч√трхьє■
{\it ёўшЄр■∙є■ ЇєэъЎш■} яю ёыхфє■∙хьє яЁртшыє:
$$n(E, y)=\sum\limits_{x\in f^{\,-1}(y)\cap E}i(x, f)\,,$$
уфх $i(x, f)$ -- ыюъры№э√щ Єюяюыюушўхёъшщ шэфхъё (cь. \cite{RR}).
╬яЁхфхышь Єръцх {\it шэЄхуЁры№эюх ёЁхфэхх} ёўшЄр■∙хщ ЇєэъЎшш $n(E,
\cdot)$ яю ёЇхЁх $S(y, t)$ ёююЄэю°хэшхь
$$\nu(E, y, t):=\frac{1}{\omega_{n-1}}\int\limits_{{\Bbb S}^{n-1}}n(E, y+tx)
\,d{\cal H}^{n-1}(x)\,.$$ ╧Ёш $E=B(a, r)$ яюырурхь $\nu(a, r, y,
t):=\nu(E, y, t).$

\medskip
─ы  Єюўъш $x_0\in D$ ш ўшёхы $0<r_1<r_2<\infty$ яюырурхь
$A(r_1, r_2, x_0)=\{x\in D: r_1<|x-x_0|<r_2\}.$ └эрыюу ёыхфє■∙хую
Ёхчєы№ЄрЄр ьюцхЄ с√Є№ эрщфхэ т \cite[ыхььр~1.1, уы.~IV]{Ri} (ёь.
Єръцх т \cite[ыхььр~2.6]{Ra$_1$}).

\medskip
\begin{lemma}\label{lem3}
{\sl ╧єёЄ№ $f:D\rightarrow {\Bbb R}^n$ -- юЄъЁ√Єюх фшёъЁхЄэюх
юЄюсЁрцхэшх ё ъюэхўэ√ь шёърцхэшхь фышэ√, $\theta>1,$ $\overline{B(a,
\theta r)}\subset D,$ $y\in {\Bbb R}^n$ ш $s, t>0.$ ╥юуфр
$$\nu(a, \theta r, y, s)\ge \nu(a, r, y, t)- |\log(t/s)|^{n-1}\cdot \frac{1}
{\left(\int\limits_{r}^{\theta r}\ \frac{d\omega}{\omega
k_{a}^{\frac{1}{n-1}}(\omega)}\right)^{n-1}} \,,$$
уфх $k_{a}$ юяЁхфхыхэю ёююЄэю°хэшхь (\ref{eq32*}) яЁш $Q:=K_I(x, f)$
ш $x_0:=a.$}
\end{lemma}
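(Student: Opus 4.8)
\medskip
The plan is to follow the scheme of \cite[Lemma~1.1, Ch.~IV]{Ri} (cf. \cite[Lemma~2.6]{Ra$_1$}), combining the theory of maximal sequences of liftings of paths for open discrete mappings with the modulus inequality of Lemma~\ref{lem1}. Since $|\log(t/s)|$ is symmetric in $s$ and $t$ and the case $s=t$ is immediate ($B(a,r)\subset B(a,\theta r)$ gives $n(B(a,r),\cdot)\le n(B(a,\theta r),\cdot)$ while the correction term vanishes), I may assume $s\ne t$. Fix $1<\theta^{\,\prime}<\theta$ and, for $\xi\in{\Bbb S}^{n-1}$, write $\beta_\xi$ for the straight segment joining $y+t\xi$ and $y+s\xi$, parametrised over $[s,t]$ (resp. $[t,s]$ when $s>t$).

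\medskip
First I fix $\xi$ and put $\{x_1,\ldots,x_k\}=f^{\,-1}(y+t\xi)\cap B(a,r)$. Among Rickman's maximal, essentially separate sequence of liftings of $\beta_\xi$ (\cite[Theorem~3.2, Ch.~II]{Ri}; see the conditions $(a)$--$(c)$ preceding Proposition~\ref{pr1}), those issuing from $x_1,\ldots,x_k$ number exactly $n(B(a,r),y+t\xi)=\sum_{i=1}^{k}i(x_i,f)$, since the full index is realised at the starting point; moreover at each parameter value they meet any point $x$ at most $i(x,f)$ times. Call such a lifting \emph{good} if it is defined up to the endpoint lying over $y+s\xi$ and its image stays in $B(a,\theta^{\,\prime} r)$, and \emph{bad} otherwise. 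Since $\overline{B(a,\theta^{\,\prime} r)}$ is a compact subset of $D$, a bad lifting must meet $S(a,\theta^{\,\prime} r)$, hence contains a subcurve in the shell $\{\,r\le|z-a|\le\theta^{\,\prime} r\,\}$ joining $S(a,r)$ to $S(a,\theta^{\,\prime} r)$; while the multiplicity bound at the terminal parameter shows that the number of good liftings is at most $n(B(a,\theta^{\,\prime} r),y+s\xi)$. Writing $m(\xi)$ for the number of bad liftings we thus obtain $n(B(a,r),y+t\xi)\le n(B(a,\theta^{\,\prime} r),y+s\xi)+m(\xi)$, and after integrating over ${\Bbb S}^{n-1}$, dividing by $\omega_{n-1}$ and using $B(a,\theta^{\,\prime} r)\subset B(a,\theta r)$,
$$\nu(a,r,y,t)\ \le\ \nu(a,\theta r,y,s)+\frac{1}{\omega_{n-1}}\int\limits_{{\Bbb S}^{n-1}}m(\xi)\,d{\cal H}^{n-1}(\xi)\,.$$

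\medskip
Next I bound the integral via Lemma~\ref{lem1}. Each bad lifting is itself a maximal lifting of $\beta_\xi$ and the bad liftings inherit condition $(c)$, so they provide an admissible choice for the integer-valued function $\xi\mapsto m(\xi)$, which is bounded (the multiplicity of $f$ being finite on the compactum $\overline{B(a,r)}$) and measurable by a routine argument. Hence, after a translation taking the common centre $y$ of the spheres to the origin, Lemma~\ref{lem1} gives, for every $\rho\in{\rm adm}\,\Gamma^{*}$ (with $\Gamma^{*}$ the family of these bad liftings),
$$\int\limits_{{\Bbb S}^{n-1}}m(\xi)\,d{\cal H}^{n-1}(\xi)\ \le\ |\log(t/s)|^{n-1}\int\limits_{D}K_I(x,f)\,\rho^{\,n}(x)\,dm(x)\,.$$
Since every bad lifting contains a subcurve in $\{\,r\le|z-a|\le\theta^{\,\prime} r\,\}$ joining $S(a,r)$ to $S(a,\theta^{\,\prime} r)$, any radial $\rho(z)=g(|z-a|)$ vanishing outside this shell with $\int_r^{\theta^{\,\prime} r}g(\omega)\,d\omega\ge 1$ belongs to ${\rm adm}\,\Gamma^{*}$. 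Passing to polar coordinates about $a$ and using $(\ref{eq32*})$ with $Q=K_I(\cdot,f)$, $x_0=a$, one has $\int_D K_I(x,f)\rho^{\,n}(x)\,dm(x)=\omega_{n-1}\int_r^{\theta^{\,\prime} r}g(\omega)^n\omega^{n-1}k_a(\omega)\,d\omega$, and minimising over admissible $g$ by Hölder's inequality (the extremal being $g(\omega)\propto\bigl(\omega^{n-1}k_a(\omega)\bigr)^{-1/(n-1)}$) yields the value $\omega_{n-1}\bigl(\int_r^{\theta^{\,\prime} r}\frac{d\omega}{\omega k_a^{1/(n-1)}(\omega)}\bigr)^{-(n-1)}$. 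Combining the last three displays,
$$\nu(a,r,y,t)\ \le\ \nu(a,\theta r,y,s)+|\log(t/s)|^{n-1}\Bigl(\int\limits_{r}^{\theta^{\,\prime} r}\frac{d\omega}{\omega k_a^{1/(n-1)}(\omega)}\Bigr)^{-(n-1)}\,,$$
and letting $\theta^{\,\prime}\rightarrow\theta-0$ (so that the integral increases to $\int_r^{\theta r}$ by monotone convergence) gives the asserted inequality.

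\medskip
The crux is the combinatorics of the first step: Rickman's maximal-sequence-of-liftings machinery must be invoked carefully enough to secure simultaneously that the liftings starting in $B(a,r)$ realise precisely $n(B(a,r),y+t\xi)$, that the endpoints of the good liftings consume at most $n(B(a,\theta^{\,\prime} r),y+s\xi)$ units of multiplicity, and that the remaining (bad) liftings still form a legitimate essentially separate family to which Lemma~\ref{lem1} applies; the minor technical points (measurability and boundedness of $\xi\mapsto m(\xi)$, and the harmless passage from $\theta^{\,\prime}$ to $\theta$, which is needed precisely because $f(S(a,\theta r))$ need not be $(n-1)$-negligible) are handled as above. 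Once Lemma~\ref{lem1} is in hand the remaining modulus/Hölder computation is routine.
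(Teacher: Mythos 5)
Your argument is correct and follows essentially the same route as the paper's own proof: the same dichotomy of Rickman's maximal liftings into those terminating properly inside the larger ball and those forced to cross the spherical shell, the same reduction to Lemma~\ref{lem1} with a radial test function extremised via H\"older's inequality giving $\omega_{n-1}\bigl(\int_r^{\theta r}\frac{d\omega}{\omega k_a^{1/(n-1)}(\omega)}\bigr)^{-(n-1)}$, and the same limiting step (your $\theta^{\,\prime}r\rightarrow\theta r$ plays the role of the paper's $\theta r-\varepsilon$ with $\varepsilon\rightarrow 0$). The only cosmetic difference is that the paper defines $m(z):=\max\{0,\,n(B(a,r),\cdot)-n(B(a,\theta r),\cdot)\}$, which is manifestly measurable, and then selects that many boundary-approaching liftings, thereby sidestepping the measurability of your count of all bad liftings.
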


\medskip
\begin{proof}
═х юуЁрэшўштр  юс∙эюёЄш Ёрёёєцфхэшщ, ьюцэю ёўшЄрЄ№, ўЄю $s<t$
(фюърчрЄхы№ёЄтю т ёыєўрх $t>s$ яЁютюфшЄё  рэрыюушўэю). ─ы 
ЇшъёшЁютрээюую $z\in {\Bbb S}^{n-1}$ яюырурхь
$m(z):=\max\{0, n(B(a, r), y+sz)- n(B(a, \theta r), y+tz)\}.$
╧Ёхфяюыюцшь, ўЄю $m(z)>0.$ ┬тшфє \cite[ЄхюЁхьр~3.2, уы.~II]{Ri}
эрщфєЄё  $n(B(a, r), y+sz)$ ьръёшьры№э√ї яюфэ Єшщ ъЁштющ
$\beta_z=zu,$ $u\in (s, t),$ ё эрўрыюь т °рЁх $\overline{B(a, r)}$ ш
ыхцр∙шї т °рЁх $B(a, \theta r).$ ┬тшфє Єющ цх ЄхюЁхь√, тёх ¤Єш
яюфэ Єш  ёє∙хёЄтхээю юЄфхышь√ (ёь. Єрь цх).

\medskip
╨рёёьюЄЁшь ъръюх-ышсю шч Єръшї яюфэ Єшщ $\alpha:[s, c]\rightarrow
B(a, \theta r).$ ┼ёыш яЁхфхы№эюх ьэюцхёЄтю ъЁштющ $\alpha$ Ўхышъюь
ыхцшЄ тэєЄЁш °рЁр $B(a, \theta r),$ Єю, тю-яхЁт√ї, т ёшыє
фшёъЁхЄэюёЄш юЄюсЁрцхэш  $f$ юэю  ты хЄё  юфэюЄюўхўэ√ь; тю-тЄюЁ√ї, т
¤Єюь ёыєўрх $f(\alpha(c))=zt.$ ╟рьхЄшь, ўЄю юс∙хх ъюышўхёЄтю ъЁшт√ї
ёЁхфш $n(B(a, r), y+sz)$ ьръёшьры№э√ї яюфэ Єшщ ъЁштющ $\beta_z,$
ёюфхЁцр∙шї Єръє■ Єюўъє $x:=\alpha(c),$ эх яЁхт√°рхЄ ўшёыр $k:={\rm
card}\{f^{\,-1}(zt)\cap B(a, \theta r)\}\le n(B(a, \theta r),
y+tz).$ ╥ръшь юсЁрчюь, ёЁхфш $n(B(a, r), y+sz)$ яюфэ Єшщ $\alpha,$
яю ъЁрщэхщ ьхЁх, $n(B(a, r), y+sz)-k\ge m(z)$ ъЁшт√ї эх ёюфхЁцрЄ
яЁююсЁрчют Єюўъш $zt$ яЁш юЄюсЁрцхэшш $f$ т °рЁх $B(a, \theta r).$
╥ръшь юсЁрчюь, $m(z)$ ъЁшт√ї $\alpha$ Єръют√, ўЄю ${\rm
dist}\,(\alpha(u), S(a, \theta r))\rightarrow 0$ яЁш $u\rightarrow
c-0.$

\medskip
┬тшфє ыхьь√ \ref{lem1} ь√ яюыєўшь, ўЄю яЁш тёхї фюёЄрЄюўэю ьры√ї
$\varepsilon>0$ ш яЁюшчтюы№эющ ЇєэъЎшш $\rho\in {\rm
adm}\,\Gamma(S(a, r), S(a, \theta r-\varepsilon), A(r, \theta
r-\varepsilon, a))$
\begin{equation}\label{eq10}
\int\limits_{{\Bbb S}^{n-1}}m(z)d{\cal H}^{n-1}(z)\le
\left(\log\frac{t}{s}\right)^{n-1} \int\limits_{D} K_I(x,
f)\cdot\rho^n(x)dm(x)\,.
\end{equation}
╧юырурхь
\begin{equation}\label{eq9A}
I=I(x_0,r_1,r_2)=\int\limits_{r_1}^{r_2}\
\frac{dr}{rq_{x_0}^{\frac{1}{n-1}}(r)}\,,
\end{equation}
уфх $q_{x_0}$ юяЁхфхыхэю т (\ref{eq32*}). ╧єёЄ№ $I=I(a, r, \theta
r-\varepsilon)\ne 0,$ уфх $I$ юяЁхфхыхэю т (\ref{eq9A}) яЁш
$Q:=K_I(x, f),$ $x_0:=a,$ $r_1:=r$ ш $r_2=\theta r-\varepsilon.$
(╤ыхфєхЄ юЄьхЄшЄ№, ўЄю $I<\infty,$ яюёъюы№ъє $Q\ge 1$). ╧Ёхфяюыюцшь
тэрўрых, ўЄю $I>0.$ ╧юырурхь
$$\psi(t)=\left \{\begin{array}{rr}
1/[\omega k_{a}^{\frac{1}{n-1}}(\omega)]\ , & \ \omega\in (r ,\theta
r-\varepsilon)\ ,
\\ 0\ ,  &  \ \omega\notin (r, \theta r-\varepsilon)\ ,
\end{array} \right. $$
уфх ЇєэъЎш  $k_{a}(\omega)$ юяЁхфхыхэр яю ёююЄэю°хэш■ (\ref{eq32*})
яЁш $Q:=K_I(x, f).$ ╥юуфр ттшфє ЄхюЁхь√ ╘єсшэш
\begin{equation}\label{eq3A}
\int\limits_{A} Q(x)\cdot\psi^n(|x-a|)dm(x)=\omega_{n-1} I\,,
\end{equation}
уфх $A=A(r ,\theta r-\varepsilon, a).$ ╟рьхЄшь, ўЄю ЇєэъЎш 
$\eta_1(\omega):=\psi(\omega)/I,$ $\omega\in (r, \theta
r-\varepsilon),$ єфютыхЄтюЁ хЄ ёююЄэю°хэш■ $\int\limits_{r}^{\theta
r-\varepsilon}\eta_1(\omega)d \omega=1.$ ╤ фЁєующ ёЄюЁюэ√, юЄ√∙хЄё 
сюЁхыхтёър  ЇєэъЎш  $\eta(\omega)$ Єрър , ўЄю
$\eta_1(\omega)=\eta(\omega)$ яЁш яюўЄш тёхї $\omega\in (\theta,
\theta r-\varepsilon)$ (ёь. \cite[Ёрчф.~2.3.6]{Fe}). ┬ Єръюь ёыєўрх,
ттшфє \cite[ЄхюЁхьр~5.7]{Va$_1$} ЇєэъЎш  $\rho(x):=\eta(|x-a|)\in
{\rm adm}\,\Gamma(S(a, r), S(a, \theta r-\varepsilon), A(r, \theta
r-\varepsilon, a))$ ш, чэрўшЄ, шч (\ref{eq10}) ш (\ref{eq3A})
т√ЄхърхЄ, ўЄю
\begin{equation}\label{eq11A}
\int\limits_{{\Bbb S}^{n-1}}m(z)d{\cal H}^{n-1}(z)\le
\left(\log\frac{t}{s}\right)^{n-1}\cdot \frac{\omega_{n-1}}
{\left(\int\limits_{r}^{\theta r-\varepsilon}\ \frac{d\omega}{\omega
k_{a}^{\frac{1}{n-1}}(\omega)}\right)^{n-1}}\,.
\end{equation}
╟рьхЄшь, ўЄю яюёыхфэхх ёююЄэю°хэшх ёяЁртхфыштю Єръцх ш т ёыєўрх,
ъюуфр $\int\limits_{r}^{\theta r-\varepsilon}\frac{d\omega}{\omega
k_{a}^{\frac{1}{n-1}}(\omega)}=0.$ ╧юёъюы№ъє $Q:=K_I(x, f)\ge 1,$
ЇєэъЎш  $\frac{d\omega}{\omega k_{a}^{\frac{1}{n-1}}(\omega)}$
 ты хЄё  шэЄхуЁшЁєхьющ яю $\omega$ эр $(r, \theta r)$ ш, чэрўшЄ, яю
ЄхюЁхьх юс рсёюы■Єэющ эхяЁхЁ√тэюёЄш шэЄхуЁрыр (ёь.
\cite[ЄхюЁхьр~13.2, уы.~I]{Sa}) $\frac{\omega_{n-1}}
{\int\limits_{r}^{\theta r-\varepsilon}\frac{d\omega}{\omega
k_{a}^{\frac{1}{n-1}}(\omega)}}\rightarrow \frac{\omega_{n-1}}
{\int\limits_{r}^{\theta r}\frac{d\omega}{\omega
k_{a}^{\frac{1}{n-1}}(\omega)}}$ яЁш $\varepsilon\rightarrow 0.$
╥ръшь юсЁрчюь, шч (\ref{eq11A}) т√ЄхърхЄ, ўЄю
\begin{equation}\label{eq12}
\int\limits_{{\Bbb S}^{n-1}}m(z)d{\cal H}^{n-1}(z)\le
\left(\log\frac{t}{s}\right)^{n-1}\cdot \frac{\omega_{n-1}}
{\left(\int\limits_{r}^{\theta r}\ \frac{d\omega}{\omega
k_{a}^{\frac{1}{n-1}}(\omega)}\right)^{n-1}}\,.
\end{equation}
╬сючэрўшь фрыхх ёшьтюыюь $E$ ьэюцхёЄтю $E:=\{y\in {\Bbb S}^{n-1}:
m(y)>0\}.$ ╥юуфр
$$\int\limits_{{\Bbb S}^{n-1}} n(B(a, \theta r), y+tz)\, d{\cal
H}^{n-1}(z)=$$
$$=\int\limits_{{\Bbb S}^{n-1}\setminus E} n(B(a, \theta r), y+tz)\,
d{\cal H}^{n-1}(z)+ \int\limits_{E} n(B(a, \theta r), y+tz)\, d{\cal
H}^{n-1}(z)\ge$$
$$
\ge \int\limits_{{\Bbb S}^{n-1}\setminus E} n(B(a, r), y+sz)\,
d{\cal H}^{n-1}(z)+ \int\limits_{E} n(B(a, r), y+ sz)\, d{\cal
H}^{n-1}(z)-$$
$$-\int\limits_{E} m(z)\, d{\cal H}^{n-1}(z)=$$
\begin{equation}\label{eq14}
=\int\limits_{{\Bbb S}^{n-1}} n(B(a, r), y+sz)\, d{\cal H}^{n-1}(z)
- \int\limits_{{\Bbb S}^{n-1}} m(z)\, d{\cal H}^{n-1}(z)\,.
\end{equation}
╧ю юяЁхфхыхэш■ тхышўшэ√ $\nu$ шч (\ref{eq12}) ш (\ref{eq14})
т√ЄхърхЄ, ўЄю
$\nu(a, \theta r, y, tz)\ge \nu(a, r, y, sz)-
\left(\log\frac{t}{s}\right)^{n-1}\cdot \frac{1}
{\left(\int\limits_{r}^{\theta r}\ \frac{d\omega}{\omega
k_{a}^{\frac{1}{n-1}}(\omega)}\right)^{n-1}},$
ўЄю ш ЄЁхсютрыюё№ єёЄрэютшЄ№.
\end{proof}$\Box$

\medskip
└эрыюу ёыхфє■∙хую єЄтхЁцфхэш  фы  юЄюсЁрцхэшщ ё ъюэхўэ√ь шёърцхэшхь
фюърчрэ т ЁрсюЄх \cite[ыхььр~2.7]{Ra$_1$}.

\medskip
\begin{lemma}\label{lem2}
{\sl ╧єёЄ№ $f:D\rightarrow {\Bbb R}^n$ -- юЄюсЁрцхэшх ё ъюэхўэ√ь
шёърцхэшхь фышэ√, $E$ ш $F$ -- эхяхЁхёхър■∙шхё  ьэюцхёЄтр, ыхцр∙шх т
°рЁх $\overline{B(a, R)}$ Єръшх, ўЄю $f(E)\subset B(z, s),$
$f(F)\subset {\Bbb R}^n\setminus B(z, t),$ $s<t.$ ╧єёЄ№ Єръцх яЁш
эхъюЄюЁюь $\theta>1$ шьххЄ ьхёЄю тъы■ўхэшх $\overline{B(a, \theta
R)}\subset D.$ ╥юуфр, хёыш $\Gamma$ -- ёхьхщёЄтю тёхї ъЁшт√ї,
ёюхфшэ ■∙шї $E$ ш $F$ т °рЁх $B(a, R)$ ш, ъЁюьх Єюую, $K_O(x, f)\le
Q(x)$ яюўЄш тё■фє яЁш эхъюЄюЁющ шчьхЁшьющ ЇєэъЎшш $Q:D\rightarrow
[1, \infty],$ Єю
$$M_{1/Q}(\Gamma)\le \omega_{n-1}\left(
\frac{\nu(a, \theta R, z, t)}{\log^{n-1}(t/s)} +
\frac{1}{\left(\int\limits_{R}^{\theta R} \frac{d\omega}{\omega
\widetilde{q}^{\frac{1}{n-1}}_{a}(\omega)}\right)^{n-1}}\right)\,,$$}
уфх $\widetilde{q}_{a}(\omega)$ -- ёЁхфэхх шэЄхуЁры№эюх чэрўхэшх
ЇєэъЎшш $Q^{n-1}$ эр ёЇхЁх $S(a, \omega).$
\end{lemma}

\medskip
\begin{proof} ═х юуЁрэшўштр  юс∙эюёЄш Ёрёёєцфхэшщ, ьюцэю ёўшЄрЄ№, ўЄю $z=0.$
┬√схЁхь $\rho\in {\rm adm}\, f(\Gamma),$ яюырур 
$$\rho(y)=\left \{\begin{array}{rr} \frac{1}{\log (t/s)|y|}\ , & \
s<|y|<t,
\\ 0\ ,  & \ y\in {\Bbb R}^n\setminus A(s, t, 0).
\end{array} \right.$$
╧ю ЄхюЁхьх \ref{th1}
$$M_{1/Q}(\Gamma)\le \int\limits_{{\Bbb R}^n}\rho^n(y)n(B(a, R),
y)dm(y)=$$
$$=\frac{1}{\log^{n} (t/s)}\cdot\int\limits_{s}^t \frac{1}{r^n}
\int\limits_{S(0, r)} n(B(a, R), y) d{\cal H}^{n-1}(y) dr=$$
$$=\frac{1}{\log^{n} (t/s)}\cdot\int\limits_{s}^t \frac{1}{r}
\int\limits_{{\Bbb S}^{n-1}} n(B(a, R), ry) d{\cal H}^{n-1}(y) dr=$$
\begin{equation}\label{eq15}
=\frac{\omega_{n-1}}{\log^{n} (t/s)}\cdot\int\limits_{s}^t
\frac{1}{r}\cdot \nu(a, R, 0, r)dr\,.
\end{equation}
╧ю ыхььх \ref{lem3} ё єў╕Єюь эхЁртхэёЄт (\ref{eq9C}) фы  тё ъюую
$r\in (s, t)$
$$\nu(a, R, 0, r) \le \nu(a, \theta R, 0, t)+ (\log(t/s))^{n-1}\cdot \frac{1}
{\left(\int\limits_{R}^{\theta R}\ \frac{d\omega}{\omega
\widetilde{q}^{\frac{1}{n-1}}_{a}(\omega)}\right)^{n-1}} \,,$$
уфх $\widetilde{q}_{a}(\omega)$ юсючэрўрхЄ ёЁхфэхх шэЄхуЁры№эюх
чэрўхэшх ЇєэъЎшш $Q^{n-1}$ эр ёЇхЁх $S(a, \omega).$ ╥юуфр шч
ёююЄэю°хэшщ т (\ref{eq15}) ь√ яюыєўрхь эхюсїюфшьюх чръы■ўхэшх.
\end{proof}$\Box$

\medskip
\section{─юърчрЄхы№ёЄтю юёэютэюую Ёхчєы№ЄрЄр} {\it ─юърчрЄхы№ёЄтю
ЄхюЁхь√ \ref{th3}.} ═х юуЁрэшўштр  юс∙эюёЄш Ёрёёєцфхэшщ, ьюцэю
ёўшЄрЄ№, ўЄю $b=0.$ ╧юырурхь $E=\gamma([0, 1)).$ ╧юёъюы№ъє $b=0$ --
рёшьяЄюЄшўхёъшщ яЁхфхы юЄюсЁрцхэш  $f$ т схёъюэхўэюёЄш, эрщф╕Єё 
ўшёыю $R>0$ Єръюх, ўЄю $f(E\cap({\Bbb R}^n\setminus B(0, R)))\subset
{\Bbb B}^n.$ ╨рёёьюЄЁшь ъЁштє■ $\beta(t)=y_0t,$ $t\in [1, \infty),$
уфх $y_0$ Єръютю, ўЄю $|y_0|:=\max\limits_{x\in B(0,
R)}|f(x)|=|f(x_0)|,$ $x_0\in S(0, R)$ ш $y_0\in S(0, M_f(R)).$ ═х
юуЁрэшўштр  юс∙эюёЄш, ьюцэю ёўшЄрЄ№, ўЄю $M_f(R)>1.$ ╧юёъюы№ъє $f$
-- фшёъЁхЄэюх ш юЄъЁ√Єюх юЄюсЁрцхэшх, ёє∙хёЄтєхЄ ьръёшьры№эюх
яюфэ Єшх $\alpha:[1, c)\rightarrow {\Bbb R}^n$ ъЁштющ $\beta$ т
${\Bbb R}^n$ ё эрўрыюь т Єюўъх $x_0$ (ёь. \cite[ЄхюЁхьр~3.2,
уы.~II]{Ri}). ╧єёЄ№ $F_R$ -- ъюьяюэхэЄр ёт чэюёЄш ьэюцхёЄтр
$f^{\,-1}({\Bbb R}^n\setminus \overline{B(0, M_f(R))}),$ ёюфхЁцр∙р 
¤Єє ъЁштє■ $\alpha|_{(0, c)},$ Єюуфр $x_0\in \overline{F_R}\cap S(0,
R)\ne \varnothing$ ш $\alpha(t)\rightarrow \infty$ яЁш $t\rightarrow
c-0.$ (┬ ўрёЄэюёЄш, юЄё■фр ёыхфєхЄ, ўЄю ъюьяюэхэЄр $F_R$
эхюуЁрэшўхэр). ╟рьхЄшь, ўЄю $\left(E\cap({\Bbb R}^n\setminus B(0,
R))\right)\cap F_R=\varnothing.$ ┬тшфє ыхьь√ \ref{lem2} фы 
ёхьхщёЄтр ъЁшт√ї $\Gamma,$ ёюхфшэ ■∙шї ьэюцхёЄтр $E$ ш $F_R$ т
ъюы№Ўх $B(0, \theta R)\setminus B(0, R),$ ь√ сєфхь шьхЄ№, ўЄю яЁш
$K> 1$
\begin{equation}\label{eq37}
\nu(0, K\theta R, 0,
1)\ge\frac{\log^{n-1}M_f(R)}{\omega_{n-1}}\left(M_{1/Q}(\Gamma)-\frac{1}
{\left(\int\limits_{\theta R}^{K\theta R}\frac{d\omega}{\omega
\widetilde{q}^{\frac{1}{n-1}}_{0}(\omega)}\right)^{n-1}}\right)\,,
\end{equation}
уфх $\widetilde{q}_0(\omega)$ -- ёЁхфэхх шэЄхуЁры№эюх чэрўхэшх
ЇєэъЎшш $Q^{n-1}(x)$ эр ёЇхЁх $S(0, \omega).$ ╚ч ЄхюЁхь√ \ref{th2}
т√ЄхърхЄ, ўЄю яЁш $\theta>\theta_1$ ш $R>R_0$
$$M_{1/Q}(\Gamma)\ge \frac{C_{n,
\alpha}}{\frac{\theta^nR^n}{\theta^nR^n-R^n}\left(\cdot\frac{1}{\theta^nR^n-R^n}\int\limits_{B(0,
\theta R)\setminus B(0, R)}
Q^{\alpha}(x)\,dm(x)\right)^{1/\alpha}}\ge $$
$$\ge\frac{C_{n, \alpha}}{\frac{C_0}{\theta^nR^n}\int\limits_{B(0,
\theta R)} Q^{\alpha}(x)\,dm(x)} \,.$$
╥юуфр шч (\ref{eq37}), єўшЄ√тр  єёыютш  (\ref{eq18}) ш (\ref{eq16})
(яЁш $\theta\ge\theta_2$ ш $K\ge K_0,$ уфх $K_0$ -- эхъюЄюЁюх
фюёЄрЄюўэю сюы№°юх ўшёыю), яюыєўрхь
\begin{equation}\label{eq17}
\nu(0, K\theta R, 0, 1)\ge C_1\cdot\log^{n-1}M_f(R)
\end{equation}
яЁш эхъюЄюЁющ яюёЄю ээющ $C_1>0.$ ╧Ёртр  ўрёЄ№ яюёыхфэхую
ёююЄэю°хэш  эшъръ эх чртшёшЄ юЄ ярЁрьхЄЁют $K$ ш $\theta,$ яю¤Єюьє
яЁюшчтхф  яхЁхюсючэрўхэш  т (\ref{eq17}), ь√ сєфхь шьхЄ№
\begin{equation}\label{eq19}
\nu(0, \theta R, 0, 1)\ge C_1\cdot\log^{n-1}M_f(R)
\end{equation}
фы  яЁюшчтюы№эюую $\theta\ge\theta_3>1.$ ╤ фЁєующ ёЄюЁюэ√, яюёъюы№ъє
$f$ -- юЄъЁ√Єюх юЄюсЁрцхэшх, эшърър  Єюўър ёЇхЁ√ $S(0, M_f(R))$ эх
 ты хЄё  юсЁрчюь Єюўъш юЄъЁ√Єюую °рЁр $B(0, R)$ яЁш юЄюсЁрцхэшш $f$
ш, чэрўшЄ,
$$\nu(0, K\theta R, 0, M_f(K\theta R))=0$$ фы  яЁюшчтюы№эюую $K>1.$
╥юуфр (яю ыхььх \ref{lem3}) $0=\nu(0, K\theta R, 0, M_f(K\theta
R))\ge$
$$\ge \nu(0, \theta R, 0, 1)- \log M_f^{n-1}(K\theta R)\cdot \frac{1}
{\left(\int\limits_{\theta R}^{K\theta R}\ \frac{d\omega}{\omega
\widetilde{q}^{\frac{1}{n-1}}_{0}(\omega)}\right)^{n-1}}\,.$$
╬Єё■фр ш шч єёыютш  (\ref{eq16}) т√ЄхърхЄ, ўЄю яЁш тёхї $K\ge K_1,$
$\theta\ge\theta_4>1$ ш $R>R_0$
\begin{equation}\label{eq20}
\nu(0, \theta R, 0, 1)\le C_1/2\cdot \log^{n-1}M_f(K\theta R)\,.
\end{equation}
╚ч (\ref{eq19}) ш (\ref{eq20}) ёыхфєхЄ, ўЄю яЁш
$\theta\ge\max\{\theta_1, \theta_2, \theta_3, \theta_4\},$ $R>R_0$ ш
$K\ge K_2$
\begin{equation}\label{eq21}
\log^{n-1}M_f(K\theta R)\ge 2\cdot \log^{n-1}M_f(R)\,.
\end{equation}
┬√яшё√тр  ёююЄэю°хэшх (\ref{eq21}) фы  яюёыхфютрЄхы№эюёЄш $R_0=R,$
$R_1=K\theta R,$ $\ldots,$ $R_m=K^m\theta^m R, \ldots$ ь√ яюыєўшь:
$$\log^{n-1}M_f(K^m\theta^m  R)\ge \ldots \ge 2^m\cdot
\log^{n-1}M_f(R)\,,$$
юЄъєфр $(n-1)\log\log M_f(K^m\theta^m  R)\ge m\log 2 + (n-1)\log
\log M_f(R)$ ш
$$(n-1)\frac{\log\log M_f(K^m\theta^m  R)}{\log R_m}\ge
\frac{m\log 2}{m\log (K\theta)+\log R}+(n-1)\frac{\log\log
M_f(R)}{m\log (K\theta)+\log R}\,.$$
╬Єё■фр яюър ўЄю ёыхфєхЄ, ўЄю фы  ы■сющ яюфяюёыхфютрЄхы№эюёЄш эюьхЁют
$m_k,$ фы  ъюЄюЁющ яЁхфхы ыхтющ ўрёЄш яюёыхфэхую ёююЄэю°хэш 
ёє∙хёЄтєхЄ, т√яюыэхэю
\begin{equation}\label{eq22}
\lim\limits_{k\rightarrow\infty}(n-1)\frac{\log\log
M_f(R_{m_k})}{\log R_{m_k}}\ge \frac{\log 2}{\log (K\theta)}>0\,.
\end{equation}
═рь юёЄрыюё№ яюърчрЄ№, ўЄю эш фы  ъръющ фЁєующ яюёыхфютрЄхы№эюёЄш
$x_k,$ фы  ъюЄюЁющ яЁхфхы
$\lim\limits_{k\rightarrow\infty}(n-1)\frac{\log\log M_f(x_k)}{\log
x_k}$ ёє∙хёЄтєхЄ, юэ эх ьюцхЄ с√Є№ ьхэ№°х тхышўшэ√, ёЄю ∙хщ т яЁртющ
ўрёЄш (\ref{eq22}). ╚Єръ, яєёЄ№ $x_k$ -- Єрър  яюёыхфютрЄхы№эюёЄ№,
Єюуфр яю шэфєъЎшш яюёЄЁюшь яюфяюёыхфютрЄхы№эюёЄш эюьхЁют $k_l$ ш
$m_l$ Єръшх, ўЄю $K^{m_l}\theta^{m_l}R\le x_{k_l}\le
K^{m_l+1}\theta^{m_l+1}R.$ ═х юуЁрэшўштр  юс∙эюёЄш, ьюцэю ёўшЄрЄ№,
ўЄю яЁхфхы $\lim\limits_{l\rightarrow\infty}(n-1)\frac{\log\log
M_f(K^{m_l}\theta^{m_l}R)}{\log K^{m_l}\theta^{m_l}R}$ Єръцх
ёє∙хёЄтєхЄ. ┬ ¤Єюь ёыєўрх, ттшфє (\ref{eq22}) ь√ яюыєўшь, ўЄю
$$\lim\limits_{l\rightarrow\infty}(n-1)\frac{\log\log M_f(x_{m_l})}{\log
x_{m_l}}\ge \lim\limits_{l\rightarrow\infty}(n-1)\frac{\log\log
M_f(K^{m_l}\theta^{m_l}R)}{\log K^{m_l}\theta^{m_l}R+\log
K\theta}=\frac{\log 2}{\log (K\theta)}\,.$$ ╥хюЁхьр \ref{th3}
яюыэюёЄ№■ фюърчрэр. $\Box$

\medskip
\section{═хёъюы№ъю ёыют ю ёЁртэхэшш Ёхчєы№ЄрЄют ЁрсюЄ√ ё сюыхх Ёрээшьш
Ёхчєы№ЄрЄрьш} ═х тфртр ё№ т яюфЁюсэ√щ ёЁртэшЄхы№э√щ рэрышч
яюыєўхээ√ї т эрёЄю ∙хщ ёЄрЄ№х єЄтхЁцфхэшщ яю юЄэю°хэш■ ъ Ёхчєы№ЄрЄрь
эхфртэю т√°хф°хщ ЁрсюЄ√ ╩.~╨рщрыр \cite{Ra$_1$}, єърцхь эр ёыхфє■∙шх
юсёЄю Єхы№ёЄтр, яюфў╕Ёъштр■∙шх шї эютшчэє. ┬ юЄышўшх юЄ єёыютш  Єшяр
(\ref{eq18}), т ЁрсюЄх \cite{Ra$_1$} юёэютэ√ь єёыютшхь рэрыюушўэюую
тшфр  ты хЄё  ЄЁхсютрэшх
$$\frac{1}{m(B(0, t))}\int\limits_{B(0, t)}
\exp(\Phi(K_O(x, f))\,dm(x)\le A\,,$$ уфх $\Phi:[0,
\infty)\rightarrow [0, \infty)$ -- ёЄЁюую тючЁрёЄр■∙р 
фшЇЇхЁхэЎшЁєхьр  ЇєэъЎш , Єрър  ўЄю
$\int\limits_{1}^{\infty}\frac{\Phi^{\,\prime}(t)}{t}\,dt=\infty,$
$t\cdot\Phi^{\,\prime}(t)\rightarrow \infty$ яЁш $t\rightarrow
\infty$ ш $\exp(\Phi(K_O(x, f)))\in L_{loc}^1({\Bbb R}^n).$ ╚ч
єърчрээ√ї єёыютшщ, т ўрёЄэюёЄш, т√ЄхърхЄ, ўЄю тэх°э   фшырЄрЎш 
юЄюсЁрцхэш  $K_O(x, f)$ фюыцэр с√Є№ ыюъры№эю ёєььшЁєхьр т ы■сющ
ёЄхяхэш $\alpha>0$ ш х╕ ёЁхфэхх чэрўхэшх, тч Єюх т ¤Єющ ёЄхяхэш, яю
ёъюы№ єуюфэю сюы№°шь °рЁрь фюыцэю с√Є№ юуЁрэшўхэю.

╬фэръю, эхЄЁєфэю яюёЄЁюшЄ№ яЁшьхЁ тэх°эхщ фшырЄрЎшш $Q:=K_O(x, f),$
фы  ъюЄюЁющ т√яюыэхэ√ єёыютш  (\ref{eq18}) ш (\ref{eq16}) (сюыхх
Єюую, $\widetilde{q}_{\alpha, 0}(r)\le C=const,$ уфх
$\widetilde{q}_{\alpha, 0}(r)$ -- ёЁхфэхх чэрўхэшх $Q^{\alpha}(x)$
яю ёЇхЁх $S(0, r),$ $\alpha>n-1$), юфэръю, т Єю цх тЁхь , ёЁхфэшх
чэрўхэш  юЄ ЇєэъЎшш $Q_{\varepsilon}(x):=Q^{\alpha\varepsilon}(x),$
$\varepsilon>1,$ яю °рЁрь ёъюы№ єуюфэю сюы№°юую Ёрфшєёр эх
юуЁрэшўхэ√. ─ы  ¤Єюую т ${\Bbb R}^n$ ЁрёёьюЄЁшь яЁш яЁюшчтюы№эюь
$r>0$ Єхыхёэ√щ єуюы $\gamma(r)$ ёшььхЄЁшўэ√щ юЄэюёшЄхы№эю ъръющ-ышсю
ЇшъёшЁютрээюую ыєўр $l,$ шёїюф ∙хую шч эєы . ╬сючэрўшь ўрёЄ№ ёЇхЁ√
$S(0, r),$ ъюЄюЁр  юуЁрэшўхэр фрээ√ь Єхыхёэ√ь єуыюь, ёшьтюыюь
$M(r).$ (╧юЄЁхсєхь фюяюыэшЄхы№эю, ўЄюс√ ЇєэъЎш  $\gamma(r)$ с√ыр
шчьхЁшьр яю $r$). ╧ю юяЁхфхыхэш■, $\gamma(r)=\frac{{\cal
H}^{n-1}(M(r))}{r^{n-1}}$ (т ўрёЄэюёЄш, $0\le {\cal
H}^{n-1}(M(r))\le \omega_{n-1}r^{n-1}$ ш
$0\le\gamma(r)\le\omega_{n-1}$). ╧юырурхь ЄхяхЁ№ фы  ЇшъёшЁютрээющ
шчьхЁшьющ яю ╦хсхує ЇєэъЎшш $\psi:(0, \infty)\rightarrow (0,
\infty)$ (Єръющ ўЄю $\psi(r)\equiv 1\quad \forall\,\, r\in (0, 1)$)
$Q^{\alpha}(x):=\psi(|x|)$ яЁш $x\in M(r),$ $|x|=r,$ ш
$Q^{\alpha}(x)=0$ т яЁюЄштэюь ёыєўрх. ╥юуфр $\widetilde{q}_{\alpha,
0}(r)=\frac{1}{\omega_{n-1}}\gamma(r)\psi(r),$ $$\frac{1}{m(B(0,
R))}\int\limits_{B(0,
R)}Q^{\alpha}(x)dm(x)=\frac{1}{\Omega_nR^n}\int\limits_{0}^R
\psi(r)\gamma(r)r^{n-1}dr$$ ш $$\frac{1}{m(B(0,
R))}\int\limits_{B(0,
R)}Q^{\alpha\varepsilon}(x)dm(x)=\frac{1}{\Omega_nR^n}\int\limits_{0}^R
\psi^{\varepsilon}(r)\gamma(r)r^{n-1}dr\,.$$ ╧юырурхь ЄхяхЁ№ яЁш
$r\ge 1:$ $\psi(r)=r,$ $\gamma(r)=\omega_{n-1}/r.$ ╠√ тшфшь т ¤Єюь
ёыєўрх, ўЄю $\widetilde{q}_{\alpha, 0}(r)\le C_1=const,$
$\frac{1}{m(B(0, R))}\int\limits_{B(0, R)}Q^{\alpha}(x)dm(x)\le
C_2=const,$ юфэръю, $\frac{1}{m(B(0, R))}\int\limits_{B(0,
R)}Q^{\alpha\varepsilon}(x)dm(x)\rightarrow\infty$ яЁш $R\rightarrow
\infty.$ ╥ръшь юсЁрчюь, яюёЄЁюхэ яЁшьхЁ тэх°эхщ фшырЄрЎшш $K_O(x,
f),$ єфютыхЄтюЁ ■∙шщ яЁш $Q:=K_O(x, f)$ єёыютш ь (\ref{eq18}) ш
(\ref{eq16}), эю эх єфютыхЄтюЁ ■∙шщ ЄЁхсютрэш■ $\frac{1}{m(B(0,
t))}\int\limits_{B(0, t)} \exp(\Phi(K_O(x, f))\,dm(x)\le A=const$
($\Phi:[0, \infty)\rightarrow [0, \infty)$ -- яЁюшчтюы№эр  ёЄЁюую
тючЁрёЄр■∙р  фшЇЇхЁхэЎшЁєхьр  ЇєэъЎш , Єрър  ўЄю
$t\cdot\Phi^{\,\prime}(t)\rightarrow \infty$ яЁш $t\rightarrow
\infty$).

\medskip
╨хчєы№ЄрЄ√ ЁрсюЄ√ ьюуєЄ яЁшьхэхэ√ ъ Ёрчышўэ√ь ъырёёрь яыюёъшї ш
яЁюёЄЁрэёЄтхээ√ї юЄюсЁрцхэшщ (ёь., эряЁ., \cite{IM},
\cite{MRSY},\cite{MRSY$_1$}, \cite{GRSY}, \cite{BGMV} ш \cite{GS}).


\medskip
╩╬═╥└╩╥═└▀ ╚═╘╬╨╠└╓╚▀

\noindent{{\bf ┼тухэшщ └ыхъёрэфЁютшў ╤хтюёЄ№ эют} \\╚эёЄшЄєЄ
яЁшъырфэющ ьрЄхьрЄшъш ш ьхїрэшъш ═└═ ╙ъЁршэ√ \\
83 114 ╙ъЁршэр, у. ─юэхЎъ, єы. ╨юч√ ╦■ъёхьсєЁу, ф. 74, \\
Ёрс. Єхы. 8-380-62-311 01 45, \\
e-mail: brusin2006@rambler.ru, esevostyanov2009@mial.ru}

\end{document}